\newtheorem{theorem}{Theorem}
\newtheorem{corollary}{Corollary}
\newtheorem{lemma}{Lemma}
\newtheorem{remark}{Remark}
\keywords{Topological ring, Endomorphism ring, Bohr topology, Finite topology, Locally compact ring}
\subjclass{16W80; 16N20;  16S50; 16N40}
\thanks{
Supported by  UAEU grants:  UPAR  G00001922 and  GG00002160}
\title[Completely simple rings]
{Completely simple endomorphism rings \\of modules}
\author{V.~Bovdi, M.~Salim, Mihail Ursul}
\address{Department of Math. Sciences\\
UAE University\\
Al-Ain\\
United Arab Emirates}
\email{vbovdi@gmail.com}
\email{m.salim@uaeu.ac.ae}
\address{Department of Mathematics and Computer Science,\\ University of Technology, Lae,\\
Papua New Guinea}
\email{mihail.ursul@gmail.com}
\begin{document}
\maketitle

\begin{abstract}
It is proved that if $A_p$ is a countable elementary abelian $p$-group, then:
(i) The ring ${\rm End}\,(A_p)$ does not admit a nondiscrete locally compact ring topology.
(ii) Under (CH) the simple ring  ${\rm End}\,(A_p)/I $, where $I$ is the ideal of ${\rm End}\,(A_p)$ consisting of all endomorphisms with finite images,  does not admit a nondiscrete locally compact ring topology.
(iii) The finite topology  on  ${\rm End}\,(A_p)$ is the only second metrizable ring topology on it.
Moreover, a characterization of completely simple endomorphism rings of the endomorphism rings of modules over  commutative rings  is also obtained.
\end{abstract}

\section{Introduction}
The notion of associative simple ring can be extended for associative  topological rings in several  ways:
\begin{itemize}
\item[{(i)}]
simple abstract ring endowed with a nondiscrete ring topology (for example,  the classification of nondiscrete locally compact division rings, see \cite{Pontrjagin_book}, Chapter IV and  \cite{Bourbaki_II, Jacobson_2,  Jacobson_3}; we refer to  some historical notes about  locally compact division rings to  \cite{Warner_book_II});
\item[{(ii)}]
 topological ring without nontrivial closed ideals (for example,  see \cite{Leptin, Zelinsky_I}).
\item[{(iii)}]topological ring $R$ with the property that if $f:R\to S$ is a continuous homomorphism in a topological ring $S$, then either $f=0$ or $f$ is a topological embedding of $R$ into $S$ (see \cite{Mutylin}).
\end{itemize}

In all cases it is assumed that multiplication is not trivial.

I.~Kaplansky has mentioned  (see \cite{Kaplansky_selected}, p.56) that classification of locally compact simple rings in characteristic $p>0$  is difficult. He proved that every simple nondiscrete locally  compact simple torsion-free ring is a matrix ring over a locally  compact division  ring. However in \cite{Skornjakov} (see also \cite{Warner_book}) has been constructed a nondiscrete locally  compact simple ring of positive characteristic which is not a  matrix ring over a division ring. Thereby the program of classification of nondiscrete locally compact simple rings was finished. Nevertheless it is interesting  to look for new examples of locally compact simple rings.

If $A_p$ is a countable elementary abelian $p$-group and $I$ is an ideal of  ${\rm End}\,(A_p)$ consisting of endomorphisms with finite images, then the factor ring ${\rm End}\,(A_p)/I$ is a simple von Neumann regular ring. We prove that under (CH) this ring does not admit a nondiscrete locally compact ring topology.

\bigskip

S.~Ulam (see \cite{Scottish_Book}, Problem 96, p.181)  posed  the following problem: "{\it Can the group $S_\infty$ of all permutations of integers so metrized that the group operation (composition of permutations) is a continuous function and the set $S_\infty$ becomes, under this metric, a compact space? (locally compact?)}". E.D.~Gaughan (see \cite{Gaughan}) has solved this problem in the negative.

\bigskip

In  \S\ref{P:1}   we  study an analogous problem for  the endomorphism ring of a countable elementary abelian $p$-group, namely: "{\it Does the endomorphism ring ${\rm End}(A_p)$ of a countable elementary $p$-group $A_p$ admit a nondiscrete locally compact ring topology?}".  Similarly to Ulam's problem we obtain a negative answer to this question. Moreover, we prove that $\mathcal{T}_{fin}$ is the only ring topology $\mathcal{T}$ on ${\rm End}\,(A_p)$ such that $({\rm End}\,(A_p), \mathcal{T})$ is complete and second metrizable.

\bigskip

We classify  in \S4   the completely  simple rings $({\rm End}\,(M), \mathcal{T}_{fin})$  of  vector spaces $M$ over division rings.
Corollary \ref{C:2} gives a characterization of semisimple left linearly compact minimal rings.  It should be mentioned that Corollary \ref{C:2} is related to a result from \cite{Arnautov_Ursul} stating that any semisimple ring admits at most one linearly compact topology.

\bigskip

Furthermore,  we obtain in \S5 a description of  completely simple rings of the form $({\rm End}\,(M_R), \mathcal{T}_{fin})$ of modules $M$ over a commutative ring $R$. We extend the result of \cite{Ware_Zelmanowitz} to topological rings $({\rm End}\,(M_R), \mathcal{T}_{fin})$.

\section{Notation, Conventions and Preliminary Results}

Rings are assumed to be associative, not necessarily with identity. Topological spaces are assumed to be completely regular.
The {\it weight} (see \cite{Engelking}, p.12) of the space $X$ is denoted by $w(X)$. The {\it pseudocharacter} of a point $x\in X$  (see \cite{Engelking}, p.135) is the smallest cardinal of the form $|\mathcal{U}|$, where  $\mathcal{U}$  is a family of open subsets of $X$ such that $\cap\mathcal{U}=\{x\}$. The closure of a subset $A$ of the topological space $X$ is denoted by $\overline{A}$ and the interior by ${\rm Int}(A)$ (see \cite{Engelking}, p.14). A topological space $X$ is called a {\it Baire  space} (see \cite{Engelking}, p.198) if for each sequence $\{X_1,X_2,\ldots\}$ of open dense subsets of $X$ the intersection $\cap_{i=1}^\infty  G_i$ is a dense set.

An abelian group $A$ is called {\it elementary abelian $p$-group} ($p$ prime) if $pa=0$ for all $a\in A$.  It is well-known  that   $A$  is a direct sum of copies of the cyclic group $Z(p)$. The subring of a ring $R$ generated by a subset $S$, is denoted by $\langle S\rangle$. A ring $R$ is called {\it locally finite} if every its finite subset is contained in a finite subring. A topological  ring $(R,\mathcal T)$  is called {\it metrizable} if its underlying additive group satisfies the first axiom of countability. A ring $R$ with $1$ is called {\it Dedekind-finite} if each equality $xy=1$ implies $yx=1$. It is well-known that every finite ring with identity is Dedekind-finite. Since every compact ring with
identity is a subdirect product of finite rings, it follows that every compact ring with identity is Dedekind-finite.
If $A\subseteq  R$, then  ${\rm Ann}_l(A):=\{x\in R\mid xA=0\}$. If $X,Y$ are the subsets of $R$, then $X\cdot Y:=\{xy\mid x\in X, y\in Y\}$.  A topological ring $R$ is called {\it compactly generated} (see \cite{Ursul}, Chapter II) if there exists a compact subset $K$ such that $R=\langle K\rangle$. If $(R,\mathcal T)$ is a topological ring and $I$ is an ideal of $R$, then the quotient topology of the factor ring $R/I$ is denoted by $\mathcal T/I$. If $K$ is a  subset of  an abelian group $A$, then  set
\[
T(K)=\{\alpha\in {\rm End}\,(A)\mid \alpha(K)=0 \}.
\]
When $K$ runs over all finite subsets of $A$, the family $\{T(K)\}$ defines a ring topology ${\mathcal{T}_{fin}}$ on
${\rm End}\,(A)$. This topology is called the {\it finite topology}.

\begin{lemma} \label{LM:1}
For any abelian group $A$ the ring $({\rm End }\, A,\mathcal T_{fin})$ is complete.
\end{lemma}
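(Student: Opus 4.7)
The plan is to identify the finite topology on $\operatorname{End}(A)$ with the subspace topology inherited from the product (pointwise convergence) topology on $A^A$, where $A$ carries the discrete topology. Once this identification is made, completeness of $(\operatorname{End}(A),\mathcal{T}_{fin})$ reduces to showing that $\operatorname{End}(A)$ is a closed additive subgroup of the complete topological group $A^A$.

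First I would recall that by definition the sets $T(\{a_1,\ldots,a_n\})=\{\alpha\mid \alpha(a_i)=0\text{ for all }i\}$ form a base of neighborhoods of $0$, so convergence in $\mathcal{T}_{fin}$ is exactly pointwise convergence in $A$ with the discrete topology. Since $A$ (discrete) is a complete uniform abelian group, the full product $A^A$ is also complete.

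Next I would take an arbitrary Cauchy filter $\mathfrak{F}$ on $(\operatorname{End}(A),\mathcal{T}_{fin})$ and show that for every $a\in A$ the net of values at $a$ is Cauchy in the discrete group $A$; since a Cauchy net in a discrete space is eventually constant, the value $f(a):=\lim_{\mathfrak{F}}\alpha(a)$ is well defined. This produces a function $f:A\to A$ to which $\mathfrak{F}$ converges pointwise.

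Finally, I would verify that $f$ is an endomorphism: given $a,b\in A$, choose $\alpha\in\operatorname{End}(A)$ belonging to enough neighborhoods of $f$ that $\alpha$ agrees with $f$ at $a$, $b$, and $a+b$ simultaneously (such $\alpha$ exist because $\mathfrak{F}$ converges pointwise and a filter refines the intersection of finitely many of its members); then $f(a+b)=\alpha(a+b)=\alpha(a)+\alpha(b)=f(a)+f(b)$. Hence $f\in\operatorname{End}(A)$ and $\mathfrak{F}\to f$, proving completeness. There is no real obstacle here; the only point requiring a line of care is the passage from pointwise convergence to additivity of the limit, which is immediate from the fact that three simultaneous coordinate conditions already determine a basic neighborhood in $\mathcal{T}_{fin}$.
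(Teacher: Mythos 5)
Your proof is correct and is essentially the standard argument: the paper itself gives no proof beyond citing \cite{Ursul}, Theorem 19.2, and the content of that reference is precisely your identification of $\mathcal{T}_{fin}$ with the topology of pointwise convergence inherited from $A^{A}$ (with $A$ discrete), followed by the verification that a pointwise limit of endomorphisms is again an endomorphism, i.e.\ that ${\rm End}\,(A)$ is closed in the complete group $A^{A}$. The one step needing care --- deducing additivity of the limit from finitely many coordinate conditions --- is handled correctly in your write-up.
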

\begin{proof}
See \cite{Ursul}, Theorem 19.2.
\end{proof}

\begin{lemma}\label{LM:2}
(Cauchy's criterion) In a Hausdorff complete commutative group $G$, in order that a family $(x_\alpha)_{\alpha\in\Omega}$ should be summable it is necessary and sufficient that, for each neighborhood $V$ of zero in $G$, there is a finite subset $\Omega_0$ of $\Omega$ such that $\Sigma_{\alpha\in K}x_\alpha\in V$ for all finite subsets $K$ of $\Omega$ which do not meet $\Omega$.
\end{lemma}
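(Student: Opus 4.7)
The plan is to recognise this as a straightforward reformulation of the Cauchy criterion for nets in a complete Hausdorff group. By definition, $(x_\alpha)_{\alpha\in\Omega}$ is summable precisely when the net of finite partial sums $s_F := \sum_{\alpha\in F} x_\alpha$, indexed by the directed set $\mathcal{F}(\Omega)$ of finite subsets of $\Omega$ under inclusion, converges in $G$. Since $G$ is Hausdorff and complete, this net converges if and only if it is Cauchy in the usual sense that for every neighbourhood $V$ of $0$ there is a finite $\Omega_0\subseteq\Omega$ with $s_{F_1}-s_{F_2}\in V$ whenever $F_1,F_2\supseteq \Omega_0$. The task thus reduces to proving that this Cauchy condition is equivalent to the stated ``tail-sum'' condition (reading $K$ to be disjoint from $\Omega_0$, the finite set provided by the criterion).

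For the forward implication, assume the Cauchy condition holds and $V$ is given, with associated $\Omega_0$. For any finite $K\subseteq\Omega$ disjoint from $\Omega_0$, set $F_1:=\Omega_0\cup K$ and $F_2:=\Omega_0$; then both contain $\Omega_0$ and
\[
\sum_{\alpha\in K}x_\alpha \;=\; s_{F_1}-s_{F_2}\;\in\; V,
\]
which is exactly the stated condition.

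Conversely, suppose the tail condition holds and let $V$ be a neighbourhood of $0$. The key step is to choose a symmetric neighbourhood $W$ of $0$ with $W+W\subseteq V$ (possible in any topological group) and then apply the hypothesis to $W$ to obtain a finite $\Omega_0$. For any $F_1,F_2\supseteq \Omega_0$, the sets $K_i:=F_i\setminus\Omega_0$ $(i=1,2)$ are finite and disjoint from $\Omega_0$, so $s_{F_i}-s_{\Omega_0}=\sum_{\alpha\in K_i}x_\alpha\in W$. Subtracting gives $s_{F_1}-s_{F_2}\in W-W\subseteq W+W\subseteq V$, so the Cauchy condition holds, and completeness then yields summability, completing the equivalence.

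There is no substantive obstacle: the proof is essentially a mechanical translation using additivity of $s_F$ with respect to disjoint unions together with a single application of the continuity of addition to absorb $W-W$ inside the prescribed $V$. The only delicate point is the symmetric choice of $W$ in the reverse direction, which ensures that tail contributions from the two sides can be compared without leaving $V$.
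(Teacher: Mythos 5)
Your proof is correct: the paper itself gives no argument for this lemma, merely citing Bourbaki, and your reduction to the Cauchy condition for the net of finite partial sums (with the symmetric neighbourhood $W$, $W+W\subseteq V$, in the converse direction) is exactly the standard argument found there. You also correctly repaired the typo in the statement by reading ``do not meet $\Omega$'' as ``do not meet $\Omega_0$'', which is how the lemma is in fact used later (e.g.\ in Lemma~\ref{LM:3}).
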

\begin{proof}
See \cite{Bourbaki_III}, p.263.
\end{proof}

\begin{lemma}\label{LM:3}
If $(x_\alpha)_{\alpha\in\Omega}$ is a summable subset in $({\rm End }\, A,\mathcal T_{fin})$ then  every subset $\Delta$ of $\Omega$ the family $(x_\beta)_{\beta\in\Delta}$ is summable.
\end{lemma}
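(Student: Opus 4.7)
The plan is to derive this directly from Cauchy's criterion (Lemma \ref{LM:2}) together with completeness (Lemma \ref{LM:1}). Since $(\mathrm{End}\,A,\mathcal{T}_{fin})$ is Hausdorff (the intersection of all $T(K)$ over finite $K\subseteq A$ is zero) and complete, the criterion is both necessary and sufficient for summability in this topology. So it suffices to verify the Cauchy condition for the subfamily $(x_\beta)_{\beta\in\Delta}$.

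Fix any basic neighborhood of zero $V = T(K)$ with $K\subseteq A$ finite; note that $T(K)$ is in fact an additive subgroup of $\mathrm{End}\,A$. Applying the necessity part of Lemma \ref{LM:2} to the summable family $(x_\alpha)_{\alpha\in\Omega}$, I obtain a finite subset $\Omega_0\subseteq\Omega$ such that $\sum_{\alpha\in F}x_\alpha\in V$ for every finite $F\subseteq\Omega$ with $F\cap\Omega_0=\emptyset$ (reading the statement of Lemma \ref{LM:2} as intended, with $\Omega_0$ in place of $\Omega$ in the final clause).

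Now set $\Delta_0 := \Omega_0\cap\Delta$, which is finite. For any finite $F\subseteq\Delta$ satisfying $F\cap\Delta_0=\emptyset$, I claim $F\cap\Omega_0=\emptyset$: indeed, any $\alpha\in F\cap\Omega_0$ would lie in $\Omega_0\cap\Delta=\Delta_0$ and also in $F$, contradicting $F\cap\Delta_0=\emptyset$. Therefore $\sum_{\beta\in F}x_\beta\in V$. This shows $(x_\beta)_{\beta\in\Delta}$ satisfies the Cauchy condition relative to the neighborhood $V$, and since $V$ was arbitrary, the sufficiency direction of Lemma \ref{LM:2} (applicable by completeness from Lemma \ref{LM:1}) gives summability of $(x_\beta)_{\beta\in\Delta}$.

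There is no real obstacle here: the argument is a routine inheritance of the Cauchy property to subfamilies, and the only thing to watch is that the finite ``tail-control'' set $\Omega_0$ produced for the original family must be intersected with $\Delta$ to produce a valid control set for the subfamily. The fact that the neighborhoods $T(K)$ are subgroups is what makes partial sums over disjoint finite sets remain inside $V$ without further bookkeeping.
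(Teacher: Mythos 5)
Your proof is correct and follows essentially the same route as the paper's: apply the necessity half of Cauchy's criterion to the full family to get a finite control set $\Omega_0$, take $\Omega_0\cap\Delta$ as the control set for the subfamily, observe that finite $F\subseteq\Delta$ disjoint from $\Omega_0\cap\Delta$ is disjoint from $\Omega_0$, and conclude by the sufficiency half (the paper takes $V$ to be a left ideal where you take the basic subgroups $T(K)$ — an immaterial difference). You also correctly read past the typo in the paper's statement of Cauchy's criterion ("do not meet $\Omega$" should be "do not meet $\Omega_0$").
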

\begin{proof}
Let $V$ be a neighborhood of zero of $({\rm End }\, A,\mathcal T_{fin})$. We can consider without loss of generality that $V$ is a left ideal of ${\rm End }\, A$. There exists a finite subset $\Omega_0$ of $\Omega$ such that $\Sigma_{\alpha\in K}x_\alpha\in V$ for every finite subset $K$ of $\Omega$ for which $K\cap\Omega_0=\varnothing$. Let $F$ be a finite subset of $\Delta$ such that $F\cap(\Omega_0\cap \Delta)=\varnothing$. If $\alpha\in F$, then $\alpha\notin\Omega_0$, hence $\Sigma_{\alpha\in F}x_\alpha\in V$. By Cauchy's criterion the family $(x_\beta)_{\beta\in\Delta}$ is summable.
\end{proof}

A topological ring $(R,\mathcal{T})$ is called {\it minimal} (see, for instance, \cite{Dikranjan})  if there is no ring topology $\mathcal{U}$ such that $\mathcal{U}\leq \mathcal{T}$
and $\mathcal{U}\not= \mathcal{T}$. A topological ring $(R,\mathcal{T})$ is called {\it simple} if  $R$ is simple as a ring without topology. A topological ring $(R,\mathcal{T})$ is called {\it weakly simple}  if  $R^2\neq 0$ and  every its closed ideal is either  $0$ or $R$.
A topological ring $(R,\mathcal{T})$ is called {\it completely simple} (see  \cite{Mutylin})  if  $R^2\neq 0$ and  for every continuous homomorphism   $f: (R,\mathcal{T})\to(S,\mathcal{U})$ in a topological ring $(S,\mathcal U)$ either  $\ker(f)=R$ or $f$ is a homeomorphism of $(R,\mathcal T)$ on ${\rm Im}(f)$. Equivalently,   $R^2\not= 0$ and   $(R,\mathcal{T})$ is weakly simple and minimal.
Let $M$ be a unitary right $R$-module over   a commutative ring $R$  with $1$.
The  module $M$ is called \emph{divisible} if $Mr=M$ for every $0\neq r\in R$.
A  right  $R$-module $M$ is called {\it faithful} if $Mr=0$ implies $r=0$ ($r\in R$).
A   right  $R$-module $M$ is called {\it torsion-free} if $mr=0$ implies that either $m=0$ or $r=0$, where $m\in M$ and $r\in R$.
Recall that a submodule $N$ of an $R$-module $M$ is called {\it fully invariant} $\alpha(N)\subseteq N$ for every endomorphism $\alpha$ of $M_R$.
We use in the sequel the notion and results from  the books \cite{Engelking, Ursul}.

\begin{remark}
If $R$ is a von Neumann regular ring, then $R^2=R$.
\end{remark}

\begin{lemma}\label{LL:1}
An ideal $I$ of a von Neumann regular ring is von Neumann regular.
\end{lemma}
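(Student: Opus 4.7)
The plan is to prove this by the standard ``sandwich trick'' that produces a new regular element which lies inside the ideal. Take an arbitrary $a \in I$; we must exhibit some $y \in I$ such that $aya = a$.

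First I would invoke the von Neumann regularity of the ambient ring $R$ to obtain an element $x \in R$ with $axa = a$. The element $x$ itself need not lie in $I$, so this does not immediately establish regularity of $I$. The key idea is to replace $x$ by $y := xax$. Since $I$ is a (two-sided) ideal and $a \in I$, absorption on the left gives $xa \in I$, and then absorption on the right gives $y = (xa)x \in I$. So $y$ is a legitimate candidate inside $I$.

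Next I would verify the regularity identity: using associativity and $axa = a$,
\[
aya = a(xax)a = (axa)(xa) = a(xa) = axa = a.
\]
This gives the required equation $aya = a$ with $y \in I$, which is the definition of von Neumann regularity for $I$ (viewed as a ring, without identity).

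There is no serious obstacle here; the only subtlety worth flagging is that $I$ need not contain an identity, so the statement really is that $I$ is regular as a ring without unit, and this is exactly what the identity $aya = a$ with $y \in I$ delivers. The trick of symmetrizing $x$ to $xax$ to force it into the ideal is the one routine move the proof requires.
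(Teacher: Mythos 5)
Your proof is correct and follows exactly the same route as the paper: take $a\in I$, pick $x\in R$ with $axa=a$, and replace $x$ by $xax\in I$, which still satisfies the regularity identity. No differences worth noting.
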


\begin{proof}
Let $i\in I$. Thus there exists $x\in R$ such that $ixi=i$. It follows that $ixixi=i $ and $xix\in I$.
\end{proof}

\begin{corollary}If $I$ an ideal of  a von Neumann regular ring $R$, then any ideal $H$ of $I$ is an ideal of $R$, too.
\end{corollary}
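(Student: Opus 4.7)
The plan is to exploit Lemma \ref{LL:1}: since $I$ is itself von Neumann regular, for every $h\in H\subseteq I$ we can find a regularity element $x\in I$ (not merely in $R$) with $hxh=h$. This localization of the regularity witness inside $I$ is the pivotal technical point, because $H$ is only assumed to absorb multiplication by elements of $I$, not by arbitrary elements of $R$.

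To verify that $H$ is an ideal of $R$, fix $r\in R$ and $h\in H$, and choose $x\in I$ with $hxh=h$ as above. Rewrite
\[
rh=r(hxh)=(rhx)h.
\]
Now $rhx\in I$ because $I$ is an ideal of $R$ and $hx\in I$; so $(rhx)h\in H$ by the assumption that $H$ is an ideal of $I$. Hence $rh\in H$. The symmetric computation $hr=(hxh)r=h(xhr)$ with $xhr\in I$ shows $hr\in H$, completing the argument.

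The only real obstacle is conceptual rather than computational: one must remember to invoke Lemma \ref{LL:1} to obtain the regularity element inside $I$. If one instead used von Neumann regularity of $R$ directly and took $x\in R$, the products $rhx$ and $xhr$ would not in general land in $I$, and the closing step ``ideal of $I$'' could not be applied. With the correct choice of $x$ the proof collapses to the two displayed lines above.
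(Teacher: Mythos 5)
Your proof is correct and is essentially the paper's own argument: the paper compresses the same idea into the one line $RH=RH^{2}\subseteq IH\subseteq H$, where $H=H^{2}$ comes from the regularity of $I$ (Lemma \ref{LL:1}) exactly as in your factorization $h=hxh$ with $x\in I$. Your element-wise rewriting $rh=(rhx)h\in IH\subseteq H$ and $hr=h(xhr)\in HI\subseteq H$ is just the unpacked version of that inclusion, and your remark about why the witness must be taken in $I$ rather than in $R$ correctly identifies the one point where the argument could go wrong.
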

\begin{proof}
$RH=RH^2\subseteq IH\subseteq H$. Similarly, $HR\subseteq H$.
\end{proof}

If $A_p$ is a $p$-elementary countable group, then
\[
I=\{\alpha\in {\rm End}\,(A_p)\vert\vert  {\rm Im}(\alpha)\vert<\aleph_0\}.
\]
Fix a linear basis $\{v_i\mid i\in \mathbb N\}$ of $A_p$ over the field $\mathbb F_p$. Using this fixed  basis, we define  the map  $e_i: A \to A$ such that
\[
e_i(v_j)=\delta_{ij}v_j, \qquad (i, j\in\mathbb N)
\]
where $\delta_{ij}$ is the Kronecker delta.

\begin{lemma}\label{LL:2} We have for ${\rm End}\,(A_p)$:
\begin{itemize}
\item[(i)] $I$ is a von Neumann regular ring.
\item[(ii)] $I$ is a simple ring.
\item[(iii)] The factor ring ${\rm End}(A_p)/I$ is  simple  von Neumann regular.
\item[(iv)] $I$ is a locally finite ring.
\end{itemize}
\end{lemma}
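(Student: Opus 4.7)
The plan is to identify $A_p$ with a vector space of dimension $\aleph_0$ over $\mathbb{F}_p$, so that ${\rm End}\,(A_p)={\rm End}_{\mathbb{F}_p}(A_p)$ and $I$ is the ideal of finite-rank $\mathbb{F}_p$-linear operators. I would represent elements of $I$ as finite sums $\sum_i w_i\otimes f_i$ with $w_i\in A_p$ and $f_i\in A_p^{*}:={\rm Hom}_{\mathbb{F}_p}(A_p,\mathbb{F}_p)$, acting by $(w\otimes f)(x):=f(x)w$, with multiplication $(w\otimes f)(w'\otimes f')=f(w')(w\otimes f')$. In this language, the operators $e_i$ from the excerpt are $v_i\otimes v_i^{*}$; it is natural to introduce the matrix units $e_{ij}:=v_i\otimes v_j^{*}$ satisfying $e_{ij}e_{kl}=\delta_{jk}e_{il}$, which form an $\mathbb{F}_p$-basis of $I$.

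For (i), given $\alpha\in I$, pick a splitting $A_p=\ker\alpha\oplus L$ with $L$ finite-dimensional and $\alpha|_L\colon L\to{\rm Im}(\alpha)$ bijective; define $\beta$ to be $(\alpha|_L)^{-1}$ on ${\rm Im}(\alpha)$ and zero on a complement, so that $\beta\in I$ and $\alpha\beta\alpha=\alpha$. For (ii), let $J$ be a nonzero ideal of $I$ and pick $0\neq\alpha\in J$; choose $v_k$ with $\alpha(v_k)\neq 0$ and $f\in A_p^{*}$ with $f(\alpha(v_k))=1$. Using the multiplication rule one checks $(v_1\otimes f)\cdot\alpha\cdot(v_k\otimes v_1^{*})=e_{11}\in J$, whence $e_{ij}=(v_i\otimes v_1^{*})\cdot e_{11}\cdot(v_1\otimes v_j^{*})\in J$ for every pair $(i,j)$, forcing $J=I$.

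For (iii), ${\rm End}\,(A_p)$ is von Neumann regular (as the endomorphism ring of the semisimple $\mathbb{F}_p$-module $A_p$), and this property descends to the quotient via lifting and projecting the equation $aba=a$; for simplicity one shows any ideal $J\supsetneq I$ of ${\rm End}\,(A_p)$ contains a unit. Given $\alpha\in J\setminus I$, both ${\rm Im}(\alpha)$ and a complement $L$ of $\ker\alpha$ have dimension $\aleph_0=\dim A_p$; pick $\gamma\in{\rm End}\,(A_p)$ mapping $A_p$ isomorphically onto $L$, and $\delta\in{\rm End}\,(A_p)$ restricting to an isomorphism ${\rm Im}(\alpha)\to A_p$ (and zero on a complement of ${\rm Im}(\alpha)$). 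Then $\delta\alpha\gamma$ is an automorphism of $A_p$, hence a unit of ${\rm End}\,(A_p)$ lying in $J$, so $J={\rm End}\,(A_p)$. For (iv), given a finite $S\subseteq I$, expand each $\alpha\in S$ as a finite sum $\sum w\otimes f$ and let $W\subseteq A_p$, $F\subseteq A_p^{*}$ be the finite-dimensional $\mathbb{F}_p$-spans of the appearing $w$'s and $f$'s; the set $R(W,F):=\{\sum w_i\otimes f_i:w_i\in W,\,f_i\in F\}$ is closed under addition and under the multiplication rule, hence is a subring, finite because $W$ and $F$ are finite, and contains $S$, so $\langle S\rangle\subseteq R(W,F)$ is finite. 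The main subtle point is the simplicity in (iii): it uses decisively that $\dim_{\mathbb{F}_p}A_p=\aleph_0$ is the smallest infinite cardinal, leaving no room for intermediate two-sided ideals between $I$ and ${\rm End}\,(A_p)$.
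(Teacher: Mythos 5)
Your argument is essentially sound and considerably more self-contained than the paper's, but it contains one genuine error: the claim that the matrix units $e_{ij}=v_i\otimes v_j^{*}$ form an $\mathbb{F}_p$-basis of $I$. They do not. An element of $I$ is a finite sum $\sum w_i\otimes f_i$ with the $f_i$ arbitrary elements of the \emph{full} dual $A_p^{*}$, and $A_p^{*}$ has dimension $2^{\aleph_0}$, whereas the functionals $v_j^{*}$ span only the functionals vanishing on all but finitely many $v_j$. Concretely, the rank-one endomorphism $v_1\otimes f$ with $f(v_j)=1$ for every $j$ lies in $I$ but is not a finite linear combination of the $e_{ij}$. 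This matters because your proof of (ii) ends by deducing $J=I$ from ``$e_{ij}\in J$ for every pair,'' which is exactly the step that needs the false span claim. The repair is short: once $e_{11}\in J$, every rank-one operator satisfies $w\otimes f=(w\otimes v_1^{*})\,e_{11}\,(v_1\otimes f)\in Ie_{11}I\subseteq J$, and since every element of $I$ is a finite sum of rank-one operators, $J=I$. The rest of the proposal checks out: the quasi-inverse construction in (i), the argument in (iii) that any $\alpha\notin I$ has $\dim_{\mathbb{F}_p}{\rm Im}(\alpha)=\aleph_0$ so that $\delta\alpha\gamma$ is a unit in any ideal properly containing $I$, and the finite subring $R(W,F)$ in (iv) (note that there $F$ may contain functionals of infinite support, which is harmless) are all correct.

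Your route also differs substantially from the paper's. The paper gets (i) by citing von Neumann regularity of ${\rm End}(A_p)$ together with the fact that an ideal of a regular ring is regular, gets (ii) and (iii) by citing the classical description of the ideal lattice of ${\rm End}(A_p)$ from Jacobson, and only proves (iv) by hand, showing that the left ideal $Ie_1$ is locally finite modulo its left annihilator and invoking transfer results for locally finite algebras. Your explicit quasi-inverses, the (repaired) computation $Ie_{11}I=I$, the ``extract a unit from $J\setminus I$'' argument for (iii), and the finite subring $R(W,F)$ replace these citations with elementary linear algebra; the $R(W,F)$ argument for (iv) is arguably cleaner than the paper's. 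Your closing remark that simplicity of ${\rm End}(A_p)/I$ depends on $\aleph_0$ being the smallest infinite cardinal is also correct and is precisely what fails in higher dimensions.
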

\begin{proof}

(i):\quad  It is well-known that  ${\rm End}(A_p)$ is regular (see \cite{Lam}, Theorem 4.27, p.63). Then  $I$ is von Neumann regular by Lemma \ref{LL:1}.

(ii), (iii):\quad  The ideal  $I$ is the only nontrivial ideal of   ${\rm End}(A_p)$ (see \cite{Jacobson}, \S17, Theorem 1,  p.93).  This means that  ${\rm End}(A)/I$  is simple. It is  regular by the part (i).

(iv)\quad  Since $I$ is simple (see \cite{Jacobson},  \S 12, Proposition 1),  it suffices to show that $I$ contains a nonzero locally finite right ideal.

Let us show  that the left ideal $Ie_1$ of $I$ is locally finite as a ring  (equivalently, as a $\mathbb F_p$-algebra). We have $0\neq e_1\in Ie_1$.  If $H$ is the left annihilator of $Ie_1$, then, obviously, $H$ is a locally finite ring, hence it is locally finite as a $\mathbb F_p$-algebra. We claim that $Ie_1/H$ is finite.
Indeed, define $\beta_n\in H$ $(n\ge 2)$ in the following way
\[
\beta_n(v_i)=
\begin{cases} v_n,\quad\text{for}\quad i=1;\\
0,\qquad  \text{for}\quad i\neq 1.\\
\end{cases}
\]
We claim that $Ie_1=\mathbb F_pe_1+\Sigma_{n=2}^\infty \mathbb F_p\beta_n$.

For, let $\alpha\in I$. Then $\alpha(v_1)=r_1v_1+\cdots+r_nv_n$, where $r_i\in\mathbb F_p$ and $n\in\mathbb N$.  Thus
\[
\begin{split}
\alpha e_1(v_1)& =r_1e_1(v_1)+r_2\beta_2(v_1)+\cdots+r_n\beta_n(v_1)\\
&=(r_1e_1+r_2\beta_2+\cdots+r_n\beta_n)(v_1);\\
\alpha e_1(v_j)&=(r_1e_1+r_2\beta_2+\cdots+r_n\beta_n)(v_j)\qquad (j\neq 1).
\end{split}
\]
This yields
\[
\alpha e_1=r_1e_1+r_2\beta_2+\cdots+r_n\beta_n \qquad
\]
and so $Ie_1=\mathbb F_pe_1+\Sigma_{n=2}^\infty \mathbb F_p\beta_n$.

In particular,
$Ie_1=\mathbb F_pe_1+H$, and so $H$ has a finite index in $Ie_1$. Clearly,  $Ie_1$ is a locally finite $\mathbb F_p$-algebra (see \cite{Jacobson},  Proposition 1,  p.241) and  $I$ is a locally finite $\mathbb F_p$-algebra (see \cite{Jacobson}, Proposition 2, p.242).\end{proof}

The next result  can be deduced from Lemma 36.11 of \cite{Ursul}.
\begin{lemma}\label{LL:3}
Let $A$ be a  locally compact, compactly generated, and  totally disconnected ring. If  $A$ contains a dense   locally finite subring $B$, then $A$ is compact.
\end{lemma}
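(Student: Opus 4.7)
The plan is to massage the hypotheses into a presentation of $A$ as a ring generated by a finite subring together with a compact open subring, then invoke Lemma~36.11 of \cite{Ursul} for the core compactness argument.

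First, local compactness and total disconnectedness of $A$ supply a compact open subring $V$ (by the ring-theoretic refinement of van~Dantzig's theorem). Density of $B$ together with the fact that $V$ is clopen --- so that $B+V$, being a union of cosets of the clopen additive subgroup $V$, is itself clopen and contains the dense set $B$ --- forces $B + V = A$. Compact generation gives $A = \langle K \rangle$ for some compact $K$; extracting a finite subcover of $K$ from $\{b + V : b \in B\}$ produces a finite $F \subseteq B$ with $K \subseteq F + V$, so $A = \langle F \cup V \rangle$. Local finiteness of $B$ then makes the subring $B_0 := \langle F \rangle$ of $B$ finite, and we arrive at
\[
A = \langle B_0 \cup V \rangle, \qquad B_0 \text{ finite},\ V \text{ a compact open subring.}
\]

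At this point Lemma~36.11 of \cite{Ursul} takes over. The strategy it encodes is to produce --- exploiting the profinite structure of $V$ and the continuity of the finitely many multiplications by elements of $B_0$ --- a compact open subring $V' \subseteq A$ together with a possibly enlarged finite subring $B_0' \subseteq B$ (whose extra generators are drawn from $B$ using density) such that $A = \langle B_0' \cup V' \rangle$ and $B_0' V' + V' B_0' \subseteq V'$. These inclusions make $B_0' + V'$ closed under multiplication, hence a compact subring of $A$ containing $B_0' \cup V'$; this forces $A \subseteq B_0' + V'$, and compactness of $B_0' + V'$ (a finite union of cosets of the compact set $V'$) yields the desired compactness of $A$.

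The main obstacle will be exactly this joint refinement: shrinking $V$ naively destroys the generation property $A = \langle B_0 \cup V \rangle$, while enlarging $B_0$ to maintain generation can disturb $B_0$-stability of the shrunken $V$. The interplay between the open ideals of $V$ (available since $V$ is compact and totally disconnected, hence profinite) and the local finiteness of $B$ (ensuring that any finitely many coset representatives drawn from $B$ generate a finite subring) is what allows the iteration to terminate; this balancing act is the substantive content of the cited Ursul lemma.
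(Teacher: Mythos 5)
There is a genuine gap: your argument stops exactly where the real work begins. The reduction you carry out — extracting a compact open (or merely compact symmetric) neighborhood $V$ of zero, using density to get $A=B+V$, covering the compact generating set to obtain $A=\langle F\cup V\rangle$ with $F\subseteq B$ finite, and invoking local finiteness to replace $F$ by a finite subring — is correct and matches the paper's setup. But the step from ``$A=\langle B_0\cup V\rangle$ with $B_0$ finite and $V$ compact'' to ``$A$ is compact'' is the entire content of the lemma, and you do not prove it: you hand it to Lemma~36.11 of \cite{Ursul}, whose statement you are only guessing at, and you yourself flag the circularity in the strategy you sketch (shrinking $V$ to make it $B_0$-stable destroys generation; enlarging $B_0$ to restore generation destroys stability) without resolving it. A proof that ends by saying the ``balancing act is the substantive content of the cited lemma'' has not closed the argument; indeed it is not clear that the stabilization $B_0'V'+V'B_0'\subseteq V'$ you posit can be achieved simultaneously with $A=\langle B_0'\cup V'\rangle$, since each correction of one property can undo the other and the iteration has no visible reason to terminate.

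The paper closes this gap by a different and more economical device that avoids stabilizing $V$ altogether. Using compactness of $V+V+V\cdot V$ and the covering $A=B+V$, it chooses a finite $H\subseteq B$ with $V+V+V\cdot V\subseteq H+V$, upgrades $H$ to a finite subring by local finiteness, and writes $H\setminus\{0\}=\{h_1,\dots,h_k\}$. The set $W=H+h_1V+\cdots+h_kV+V$ is then checked to be an additive subgroup, and an induction on $m$ shows $V^{[m]}\subseteq W$ for all $m$: the inductive step multiplies $W\cdot V$ out and re-absorbs each term using $V\cdot V\subseteq H+V$ and $H\cdot V\subseteq h_1V+\cdots+h_kV$. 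Hence $A=\langle V\rangle\subseteq W$, and $W$ is compact as a finite sum of compact sets. Note that $W$ is only an additive subgroup containing all iterated products of $V$ — it is never required to be a subring or to be $H$-stable — which is precisely why the termination problem you ran into never arises. To complete your proof you would need either to reproduce this induction (or an equivalent one) or to state and verify the precise content of the lemma you are citing.
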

\begin{proof}
Let $A=\langle V\rangle$, where $V$ is a compact symmetric neighborhood of zero. Since $V$ is compact, the subset $V+V+V\cdot V$ also is compact. Since $B$ is dense, $A=B+V$. By compactness of $V+V+V\cdot V$ there exists a 	 finite subset $H\subseteq  B$ such that $V+V+V\cdot V\subseteq H+V$. Since $B$ is a locally finite ring, we can assume without loss of generality that $H$ is a subring. Let  $H\setminus\{0\}=\{h_1,\ldots,h_k\}$.
The subset
\[
H+h_1V+\cdots+h_kV+V
\]
 is an open subgroup of $R(+)$. Indeed, this subset is symmetric and
\[
\begin{split}
(H+h_1V+\cdots+h_kV+V)+(H+h_1V+\cdots+h_kV+V)&\\
\subseteq H+h_1(V+V)+\cdots+h_k(V+V)+V+V&\\
\subseteq H+h_1V+\cdots+h_kV+V&.
\end{split}
\]
We  prove by induction on $m$ that
\[
V^{[m]}\subseteq H+h_1V+\cdots+h_kV+V,  \qquad (m\in\mathbb{N})
\]
where $V^{[1]}=V$ and $V^{[m]}=V^{[m-1]}\cdot V$ for all $m$.

The inclusion is obvious for $m=1$.

Assume that the assertion has been proved for $m\ge1$. Clearly,
\[
\begin{split}
V^{[m+1]}=V^{[m]}\cdot V\subseteq H\cdot V+h_1(V\cdot V)+\cdots + h_k(V\cdot V)+V\cdot V& \subseteq \\
h_1V+\cdots+ h_kV+h_1(H+V)+\cdots+ h_k(H+V)+H+V& \subseteq\\
H+h_1V+\cdots+h_kV+V&.
\end{split}
\]
Consequently,   $A=H+h_1V+\cdots+h_kV+V$, therefore $A$ is compact.
\end{proof}

An element $x$ of a topological ring is called {\it discrete} if there exists a neighborhood $V$ of zero such that $xV=0$ (i.e., the right annihilator of $x$ is open).

\begin{lemma}\label{LL:4}
The set of all discrete elements of  a topological ring is  an ideal. A simple ring with identity does not contain nonzero discrete elements.
\end{lemma}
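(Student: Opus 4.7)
The statement has two parts, and I would prove them in sequence. For the first part, let $D$ denote the set of discrete elements; I want to show $D$ is closed under subtraction and absorbs multiplication by arbitrary ring elements. If $x,y\in D$ with $xV_1=0$ and $yV_2=0$, then on the intersection $V=V_1\cap V_2$ (still a neighborhood of zero) we have $(x-y)V\subseteq xV+yV=0$, so $x-y\in D$. For $r\in R$ and $x\in D$ with $xV=0$, left absorption is immediate, since $(rx)V=r(xV)=0$.

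The only non-trivial step is right absorption, where I must exhibit a neighborhood $W$ with $(xr)W=0$, i.e.\ $x(rW)=0$. This is precisely where continuity of multiplication enters: continuity of the map $y\mapsto ry$ at $0$ produces a neighborhood $W$ of zero with $rW\subseteq V$, and then $(xr)W=x(rW)\subseteq xV=0$, so $xr\in D$. Thus $D$ is a two-sided ideal. I expect this continuity argument to be the only subtle point in the first half.

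For the second part, suppose $R$ is simple with identity and $x\in D\setminus\{0\}$. By the first part $D$ is an ideal, and it is nonzero since $x\in D$; by simplicity $D=R$, so in particular $1\in D$. But $1$ being discrete means there exists a neighborhood $V$ of zero with $V=1\cdot V=0$, i.e.\ $\{0\}$ is open, so the topology is discrete. The natural reading of the lemma is therefore that a non-discretely topologized simple ring with identity has no nonzero discrete elements; the above argument establishes precisely this contrapositive. No serious obstacle arises here — the argument is essentially a one-line deduction from part (i) plus the identity element forcing $V=0$.
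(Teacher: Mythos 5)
Your proof is correct; the paper states this lemma without proof, and your argument (closure of the set of discrete elements under subtraction and left multiplication by associativity, plus continuity of $w\mapsto rw$ to get right absorption, then $1\in D$ forcing $V=0$) is the standard one it implicitly relies on. Your observation that the statement as literally written fails for the discrete topology — where every element is trivially discrete — and must be read with a nondiscreteness hypothesis is also correct, and consistent with how the lemma is actually invoked in Theorem~\ref{T:1}, where $R$ is assumed nondiscrete.
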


\section{Locally compact ring topologies  on  ${\rm End}\,(A)$ of a countable elementary abelian $p$-group $A$}\label{P:1}

\begin{theorem}\label{T:1}
Let $R$ be a simple,   nondiscrete and  locally compact ring of  ${\rm char}(R)=p>0$ and $1\in R$.
If $V$ is a compact open subring of $R$ and $\{e_\alpha\vert \alpha\in\Omega\}$ is a set of orthogonal idempotents in $R$, then
\[
|\Omega|\leq w(V).
\]
\end{theorem}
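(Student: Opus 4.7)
I would exploit Lemma~\ref{LL:4} (a simple ring with identity contains no nonzero discrete element) together with compactness of $V$ to inject $\Omega$, with finite fibres, into a basis of neighborhoods of zero in $V$ of size $w(V)$. To set up, first reduce to the case $1\in V$ by replacing $V$ with the compact open subring $V+\mathbb F_p\cdot 1$, a finite union of cosets of $V$ with the same weight. Since $R$ is nondiscrete and $V$ is open, $V$ is itself nondiscrete, so $w(V)\ge\aleph_0$. Fix a basis $\{U_\lambda:\lambda<w(V)\}$ of neighborhoods of zero in $V$ with $\bigcap_\lambda U_\lambda=\{0\}$. By Lemma~\ref{LL:4} applied to $R$ and, symmetrically, to $R^{\mathrm{op}}$, no nonzero element $r\in R$ satisfies $rV=0$ or $Vr=0$. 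In particular, the compact set $Ve_\alpha$ is nonzero for every $\alpha$, so some index $\lambda(\alpha)$ satisfies $Ve_\alpha\not\subseteq U_{\lambda(\alpha)}$; I would pick a witness $y_\alpha=v_\alpha e_\alpha\in Ve_\alpha\setminus U_{\lambda(\alpha)}$ with $v_\alpha\in V$.

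The heart of the proof is to show that each fibre of $\alpha\mapsto\lambda(\alpha)$ is finite, which then yields $|\Omega|\le w(V)\cdot\aleph_0=w(V)$. Suppose, for contradiction, that a sequence $(\alpha_n)_{n\in\mathbb N}$ all map to a single $\lambda$. By orthogonality of the idempotents the witnesses satisfy $y_n\cdot e_{\alpha_m}=\delta_{nm}\,y_n$. Compactness of $V$ gives a cluster point $v_\infty\in V$ of $\{v_n\}$; continuity of multiplication combined with the orthogonality relations should force any limiting element produced from $v_\infty e_{\alpha_m}$ to be simultaneously idempotent and self-annihilating, and hence zero, contradicting $y_n\notin U_\lambda$. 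An alternative route goes via Cauchy's criterion (Lemmas~\ref{LM:2} and~\ref{LM:3}) in the complete ring: the family $\{y_n\}$ would have to be Cauchy-summable with terms tending to zero, but the uniform bound $y_n\notin U_\lambda$ obstructs this.

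The main obstacle is making the cluster-point (or summability) argument precise: the witnesses $y_n=v_n e_{\alpha_n}$ live in $R$, possibly outside the compact set $V$, so the limit passage must carefully transfer between cluster points in $V$ and algebraic relations in $R$. The orthogonality relations are what ultimately prevent too many idempotents from coexisting with a compact open subring of small weight.
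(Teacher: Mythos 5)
Your opening moves match the paper's: use Lemma \ref{LL:4} (and its left--right symmetric form) to produce, for each $\alpha$, a nonzero witness built from $e_\alpha$ and an element of $V$. But the heart of your argument --- that the fibres of $\alpha\mapsto\lambda(\alpha)$ are finite --- is a genuine gap, and I do not see how to close it along the lines you sketch. Taking a cluster point $v_\infty$ of the $v_n$ does not interact with orthogonality at all: for $n\neq m$ the products $v_ne_{\alpha_m}$ are unconstrained, so nothing forces $v_\infty e_{\alpha_m}$ to be idempotent or self-annihilating. If instead you take a cluster point $y_\infty$ of the witnesses $y_n=v_ne_{\alpha_n}$ themselves (which, once placed inside the compact set $V\setminus U_\lambda$, give $y_\infty\neq0$), orthogonality only yields $y_\infty e_{\alpha_m}\in\overline{\{y_ne_{\alpha_m}\mid n\neq m\}}=\{0\}$ for every $m$; a nonzero element right-annihilated by all the $e_{\alpha_m}$ is not a contradiction, since the idempotents need not sum to $1$, so the witnesses may perfectly well accumulate at a nonzero point and a fibre may be infinite. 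The summability alternative is also unfounded: Lemmas \ref{LM:2} and \ref{LM:3} concern $\mathcal T_{fin}$ on an endomorphism ring, and nothing makes the family $\{y_n\}$ summable in an arbitrary locally compact ring topology on $R$. A secondary gap: your witnesses $v_\alpha e_\alpha$ may fall outside $V$, yet you compare them with neighborhoods of zero in $V$; the paper avoids this by first choosing an open ideal $V_\alpha$ of $V$ with $e_\alpha V_\alpha\subseteq V$ and taking the witness $e_\alpha y_\alpha$ with $y_\alpha\in V_\alpha$.

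The repair is to abandon the finite-fibre counting altogether. The paper's key observation is that if some witness lay in the closure of the others, then
\[
e_\alpha y_\alpha=e_\alpha(e_\alpha y_\alpha)\in e_\alpha\,\overline{\{e_\beta y_\beta\mid\beta\neq\alpha\}}\subseteq\overline{\{e_\alpha e_\beta y_\beta\mid\beta\neq\alpha\}}=\{0\},
\]
contradicting $e_\alpha y_\alpha\neq0$. Hence each witness is isolated in the set $X$ of all witnesses, i.e.\ $X$ is a discrete subspace of $V$, and the conclusion follows at once from $|\Omega|=|X|=w(X)\le w(V)$, using monotonicity of weight under subspaces. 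No control of where $X$ accumulates inside $V$ is needed, which is exactly the information your approach would have to supply.
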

\begin{proof}
The ring $R$ does not contain nonzero discrete elements by Lemma  \ref{LL:4}. Since $R$ is locally compact and ${\rm char}(R)=p$,  it is totally disconnected. Additionally, $R$ has a fundamental system of neighborhoods of zero consisting of compact open subrings by  Lemma 9 of \cite{Kaplansky}.

If $V$ is a compact open subring of $R$, then by continuity of the ring operations  for each $\alpha\in\Omega$ there exists an open ideal $V_\alpha$ of $V$ such that $e_\alpha V_\alpha\subseteq V$.  Clearly, there exists   $y_\alpha\in V_\alpha$ for which   $e_\alpha y_\alpha\neq 0$ since $R$ has no nonzero discrete elements.

\noindent
We claim that hold the following two properties:

(i) $e_\alpha y_\alpha\not\in \overline{\{e_\beta y_\beta\vert \beta\neq \alpha\}}$ for each $\alpha\in\Omega$;

(ii) the set    $X=\{e_\alpha y_\alpha\vert \alpha\in \Omega\}$ is a discrete subspace of $V$.

Indeed, if were  $e_\alpha y_\alpha\in \overline{\{e_\beta y_\beta\vert \beta\neq \alpha\}}$ for some $\alpha\in \Omega$, then
\[
\begin{split}
e_\alpha y_\alpha=e_\alpha e_\alpha y_\alpha&\in e_\alpha\overline{\{e_\beta y_\beta\vert \beta\neq \alpha\}}\\
&\subseteq \overline{\{e_\alpha e_\beta y_\beta\vert \beta\neq \alpha\}}\\
&=\{0\},
\end{split}
\]
so  $e_\alpha y_\alpha=0$, a contradiction. The part (i) is proved.

(ii) Now,   for  each $\alpha\in\Omega$  we have  $V\setminus\overline{\{e_\beta y_\beta\vert \beta\neq \alpha\}}$ is open and, consequently,
\[
(V\setminus\overline{\{e_\beta y_\beta\vert \beta\neq \alpha\}})\cap X=\{e_\alpha y_\alpha\},
\]
by (i). Therefore the point $e_\alpha y_\alpha(\alpha\in\Omega)$ of $X$ is isolated. In other words, the subspace $X$ of $V$ is discrete.

Since $X$ is discrete,      $|\Omega|=|X|=w(X)\leq w(V)$  (see \cite{Arkhangelskii_Ponomarev}, Exercises 98-99, p.72).
\end{proof}

\begin{theorem}\label{T:2}
Let $A_p$ be a countable elementary abelian $p$-group.
Then the ring \[
I=\{\alpha\in {\rm End}\,(A_p)\vert\vert  {\rm Im}(\alpha)\vert<\aleph_0\}
\]
 does not admit a nondiscrete  ring topology $\mathcal U$
such that $(I,\mathcal U)$ is a Baire space.
\end{theorem}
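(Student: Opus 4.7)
The plan is to derive a contradiction from the combination of the Baire property of $(I,\mathcal{U})$ and the local finiteness of $I$ established in Lemma \ref{LL:2}(iv). The basic idea is that $I$ is a countable union of finite subrings, so the Baire property forces some finite subring to have nonempty interior, hence (by translation) a finite neighborhood of $0$ exists, which by Hausdorffness forces the topology to be discrete.

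First I would observe that $I$ is countable: every $\alpha\in I$ is completely determined by its action on the countable basis $\{v_i\}$ of $A_p$ together with the finite-dimensional image, so $I$ embeds into the countable ring of $\mathbb{N}\times\mathbb{N}$-matrices over $\mathbb{F}_p$ with only finitely many nonzero entries. Enumerating $I=\{a_1,a_2,\dots\}$ and setting $I_n=\langle a_1,\dots,a_n\rangle$, Lemma \ref{LL:2}(iv) guarantees that each $I_n$ is a finite subring, and clearly $I=\bigcup_{n\in\mathbb{N}} I_n$.

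Next, since the topology $\mathcal{U}$ on $I$ is Hausdorff (topological spaces in this paper being completely regular), each finite set $I_n$ is closed. Because $(I,\mathcal{U})$ is Baire, the countable cover $I=\bigcup_n I_n$ by closed sets cannot consist entirely of nowhere dense sets, so there exists $n_0$ such that $\operatorname{Int}(I_{n_0})\neq\varnothing$. Picking any point $y$ in this interior, the translate $U-y$ (where $U\subseteq I_{n_0}$ is a nonempty open set containing $y$) is an open neighborhood of $0$ contained in the finite set $I_{n_0}-y$. Thus $(I,\mathcal{U})$ admits a finite open neighborhood $V$ of $0$.

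Finally I would invoke the standard fact that a Hausdorff topological group containing a finite neighborhood of the identity is discrete: for each $0\neq v\in V$, Hausdorffness yields an open set $W_v\ni 0$ with $v\notin W_v$, and then
\[
V\cap\bigcap_{0\neq v\in V}W_v=\{0\}
\]
is open as a \emph{finite} intersection of open sets. Hence $\{0\}$ is open in $(I,\mathcal{U})$, contradicting the assumption that $\mathcal{U}$ is nondiscrete. The only subtle point is the correct use of the Baire property (a Baire space is not a countable union of nowhere dense closed sets); no substantial obstacle arises, as the argument is a clean combination of local finiteness, countability, and the Baire category theorem.
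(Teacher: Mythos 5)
Your argument has a fatal gap at its very first substantive step: the ring $I$ is \emph{not} countable. An endomorphism $\alpha$ with finite image need not have finitely many nonzero matrix entries; it only needs finitely many nonzero \emph{rows}. For instance, for every subset $S\subseteq\mathbb{N}$ the map $\alpha_S$ defined by $\alpha_S(v_i)=v_1$ for $i\in S$ and $\alpha_S(v_i)=0$ otherwise has image contained in $\mathbb{F}_pv_1$, hence lies in $I$; these $2^{\aleph_0}$ maps are pairwise distinct, so $|I|=2^{\aleph_0}$. (The paper itself uses that $|R/I|$ is the continuum in the proof of Theorem \ref{T:4}.) Consequently you cannot enumerate $I$ as $\{a_1,a_2,\dots\}$, and the decomposition $I=\bigcup_n I_n$ into finite subrings does not exist. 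Since everything downstream — the Baire argument applied to finite closed pieces, the finite neighborhood of $0$, and the Hausdorff-plus-finite-neighborhood-implies-discrete step — rests on this countable cover by finite sets, the proof collapses. (Each of those later steps is individually sound; the problem is solely that the cover is unavailable.)

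The paper's proof instead covers $I$ by the countably many sets $S_n=\{\alpha\in I\mid \alpha(A)\subseteq \mathbb{F}_pv_1+\cdots+\mathbb{F}_pv_n\}$, which are closed (being right annihilators of $\{e_k\mid k>n\}$) but \emph{infinite}, indeed of cardinality continuum. Baire then forces some $S_n$ to have nonempty interior, hence to be an open subgroup; but since $S_n$ is infinite, this does not yet yield discreteness, and the real work is a separate argument: one constructs the shift $\beta$ with $\beta(v_i)=v_{n+i}$ for $i\le n$ and shows that for any neighborhood $W\subseteq S_n$ of zero with $\beta W\subseteq S_n$ one must have $W=\{0\}$, contradicting nondiscreteness. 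Your proposal contains no analogue of this multiplicative step, and some such use of the ring structure is unavoidable once the pieces of the cover are infinite.
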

\begin{proof}

Put $S_n=\{\alpha\in I\mid  \alpha(A)\subseteq {\mathbb{F}_p} v_1+\cdots+\mathbb F_pv_n\}$, where $n\in\mathbb{N}$.
Clearly  $I=\cup_{n\in\mathbb{N}}S_n$ and
\[
S_n=\{\alpha\in I \mid e_i\alpha=0 \quad  \text{for} \quad  i> n\}={\rm Ann}_r\big(\{e_k\mid k>n\}\big).
\]
This yields that   $S_n$ is closed due continuity of the ring operations.

Since $I$ is a Baire space, there exists $n\in\mathbb{N}$ such that ${\rm Int}(S_n)\neq\emptyset$, hence $S_n$ is an open subgroup.

Set $\beta\in I$ such that
\[
\beta(v_i)=
\begin{cases} v_{n+i},\quad\text{for}\quad i=1,\ldots,n;\\
0\quad,  \quad \text{for}\quad i> n.\\
\end{cases}
\]
Let $W\subseteq S_n$ be a neighborhood of zero of $(I,\mathcal U)$ such that $\beta W\subseteq S_n$. If $w\in W\setminus 0$,  then there exist $a\in A$ and $ r_1,\ldots, r_n\in\mathbb F_p$ such that
\[
0\neq w(a)=\sum_{i=1}^n r_iv_i\qquad
\text{and} \qquad \beta( w(a))=\sum_{i=1}^n  r_iv_{n+i}.
\]
There exists  $j\in 1,\ldots, n$ such that $r_j\neq 0$. Then
\[
e_{n+j}\beta w(a)=r_jv_{n+j}\neq 0,
\]
 hence $e_{n+j}\beta w\neq 0$ and so  $\beta w\not\in S_n$, a contradiction.\end{proof}

\begin{corollary}\label{C:1}
Under the notation of Theorem \ref{T:2} the ring  $I$ does not admit a nondiscrete locally compact ring topology.
\end{corollary}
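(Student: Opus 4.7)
The plan is to reduce the corollary to Theorem \ref{T:2} by invoking the classical Baire category theorem. Recall from the conventions at the start of \S2 that all topological spaces in this paper are assumed completely regular, and so in particular Hausdorff; therefore any locally compact ring topology on $I$ would be a locally compact Hausdorff topology.

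Assume, for a contradiction, that $\mathcal{U}$ is a nondiscrete locally compact ring topology on $I$. Then $(I,\mathcal{U})$ is a locally compact Hausdorff space, hence a Baire space (this is the standard Baire category theorem in the locally compact setting; cf.\ \cite{Engelking}, p.198). This verifies the hypothesis of Theorem \ref{T:2}: we have a nondiscrete ring topology $\mathcal{U}$ on $I$ that makes $(I,\mathcal{U})$ a Baire space.

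However, Theorem \ref{T:2} asserts that no such topology exists — the ring $I$ admits no nondiscrete ring topology for which the underlying space is Baire. This contradiction shows that $\mathcal{U}$ cannot be nondiscrete, proving the corollary.

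There is essentially no obstacle here: the entire content of the corollary is already in Theorem \ref{T:2}, and the only external fact needed is the Baire property of locally compact Hausdorff spaces, which is standard. The only subtlety worth a sentence is the (implicit) Hausdorff hypothesis, which is guaranteed by the blanket convention of \S2.
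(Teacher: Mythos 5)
Your proposal is correct and is essentially identical to the paper's own proof: both reduce the corollary to Theorem \ref{T:2} via the fact that every locally compact (Hausdorff) space is a Baire space. The remark about the blanket Hausdorff convention is a harmless extra precaution.
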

\begin{proof}
This follows from the fact that  each  locally compact space is a Baire space  (see \cite{Bourbaki}, Theorem 1, p.117).
\end{proof}

Our main result is the following.

\begin{theorem}\label{T:3}
The endomorphism ring ${\rm End}\,(A_p)$ of a countable elementary abelian $p$-group $A_p$ does not admit a nondiscrete locally compact ring topology.
\end{theorem}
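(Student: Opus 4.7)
The plan is to argue by contradiction: assume $\mathcal T$ is a nondiscrete locally compact ring topology on $R = {\rm End}(A_p)$, and derive an impossibility by comparing $\mathcal T$ with the finite topology $\mathcal T_{fin}$. Since ${\rm char}\,R = p$, Kaplansky's lemma provides a fundamental system of compact open subrings in $(R, \mathcal T)$. The central step is to show $\mathcal T_{fin} \leq \mathcal T$: the left ideal $Re_i$ is the kernel of the continuous right multiplication by $1-e_i$, hence closed, and is isomorphic as an additive group to $A_p$ via $re_i \mapsto r(v_i)$, so it is countable. A countable locally compact Hausdorff group is necessarily discrete (Baire category produces an isolated point, which by translation makes the whole space discrete). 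Thus each $Re_i$ is discrete in $(R, \mathcal T)$, so for any compact open subring $V$ the compact image $Ve_i$ of the continuous map $v \mapsto ve_i$ is finite. The kernel $V \cap T(\{v_i\})$ is then a closed subgroup of finite index in the compact group $V$ and is therefore open, so each $T(\{v_i\})$ is a $\mathcal T$-neighborhood of zero.

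From this I would further deduce that every compact open subring $V$ of $(R, \mathcal T)$ is metrizable, by exhibiting a topological embedding $\phi\colon V \hookrightarrow \prod_{i \in \mathbb N} A_p$ (each factor discrete), $\phi(v) = (v(v_i))_i$: the map is continuous by the previous step, is injective since $v$ is determined by its values on a basis, and embeds compact into Hausdorff metrizable. Hence $V$ is compact metrizable with $w(V) \leq \aleph_0$.

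The final move is a case split on $\overline I$, which by Lemma \ref{LL:2} equals either $I$ or $R$. If $\overline I = I$, then $I$ is closed and locally compact, hence discrete by Corollary \ref{C:1}, and some compact open $V$ satisfies $V \cap I = 0$; then the quotient map $\pi\colon R \to R/I$ restricts to a topological embedding of $V$ onto the compact open subring $\pi(V)$ of the simple nondiscrete locally compact ring $R/I$ of characteristic $p$ with $w(\pi(V)) \leq \aleph_0$. Choosing an almost disjoint family $\{M_\alpha\}_{\alpha < 2^{\aleph_0}}$ of infinite subsets of $\mathbb N$ and setting $f_\alpha(v_j) = v_j$ for $j \in M_\alpha$ and $f_\alpha(v_j) = 0$ otherwise, the images $\pi(f_\alpha)$ form $2^{\aleph_0}$ pairwise orthogonal nonzero idempotents in $R/I$ (pairwise products lie in $I$ because $M_\alpha \cap M_\beta$ is finite), so Theorem \ref{T:1} forces $2^{\aleph_0} \leq w(\pi(V)) \leq \aleph_0$, a contradiction. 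If instead $\overline I = R$, then the countable ideal $I$ is dense, so $(R, \mathcal T)$ is separable; combined with metrizability (Birkhoff-Kakutani applied to the first-countable neighborhood base from step 2), $(R, \mathcal T)$ is a Polish topological ring. The ring $(R, \mathcal T_{fin})$ is Polish as well: it is complete by Lemma \ref{LM:1}, first countable since the finite subsets of the countable $A_p$ are countable, and separable because $I$ is dense in $\mathcal T_{fin}$ too (given $\alpha \in R$ and finite $K$, the endomorphism that agrees with $\alpha$ on $K$ and kills the remaining basis vectors lies in $I \cap (\alpha + T(K))$). Pettis's open mapping theorem for Polish groups then turns the continuous bijection $(R, \mathcal T) \to (R, \mathcal T_{fin})$ into a homeomorphism, yielding $\mathcal T = \mathcal T_{fin}$; this is absurd, since $\mathcal T_{fin}$ is not locally compact (the basic neighborhood $T(\{v_1\}) \cong \prod_{i \geq 2} A_p$ is an infinite product of nontrivial discrete groups, hence not compact). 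The main difficulty I anticipate is this second case, which is not handled by Corollary \ref{C:1} or Theorem \ref{T:1} and requires Pettis's theorem to identify the two Polish topologies.
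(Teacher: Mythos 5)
Your argument through the case $\overline{I}=I$ is sound, and in places more direct than the paper's: you obtain $\mathcal T_{fin}\le\mathcal T$ from the observation that the closed left ideals $Re_i$ are countable locally compact Hausdorff groups, hence discrete, rather than via the Baire-category argument of Theorem~\ref{T:2}, and your endgame (almost disjoint family, $2^{\aleph_0}$ orthogonal idempotents in $R/I$, contradiction with Theorem~\ref{T:1}) is exactly the paper's Facts 10--11. The genuine gap is in the case $\overline{I}=R$. That entire branch rests on the assertion that $I$ is countable, which is false: $I$ contains every homomorphism $A_p\to\mathbb F_p v_1$, and these are in bijection with the maps from the basis $\{v_i\}$ into $\mathbb F_p v_1$, so $|I|=2^{\aleph_0}$. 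Hence density of $I$ does not give separability of $(R,\mathcal T)$, you cannot conclude that $(R,\mathcal T)$ is Polish, and the Pettis/open-mapping step that would force $\mathcal T=\mathcal T_{fin}$ collapses. A locally compact metrizable group need not be separable (an uncountable discrete group already shows this), so separability really must be proved, not assumed.

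The case you left open is precisely the one the paper devotes Fact 3 to, and it is excluded there by a different mechanism: $I$ is a locally finite ring by Lemma~\ref{LL:2}(iv), so if $I$ were dense, Lemma~\ref{LL:3} would make $\overline{\langle x,y\rangle}$ compact for a pair with $xy=1$, $yx\ne 1$, contradicting the Dedekind-finiteness of compact rings with identity. Alternatively, your own Fact that $\mathcal T\ge\mathcal T_{fin}$ repairs the case without Polish-group machinery: a $\mathcal T$-compact open subring $V$ is $\mathcal T_{fin}$-compact, hence contained in $\{\alpha\mid \alpha(v_i)\in F_i\ \text{for all}\ i\}$ for suitable finite sets $F_i\subseteq A_p$; choosing $\beta$ with $\beta(v_i)\notin F_i+(\mathbb F_p v_1+\cdots+\mathbb F_p v_i)$ for every $i$ shows $\beta\notin I+V$, so $I$ is never $\mathcal T$-dense and the second case simply does not occur. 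As written, however, the proposal does not prove the theorem.
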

\begin{proof}

We will use the notation and results from section 2.
Denote $R= {\rm End}\,(A_p)$. Assume on the contrary that there exists on $R$ a nondiscrete locally compact ring topology $\mathcal T$.

\noindent
\underline{Fact 1.} The ring $(R,\mathcal T)$ has a fundamental system of neighborhoods of zero consisting of compact open subrings.

Since  the additive group of $R$  has exponent $p$, it  is totally
disconnected (this follows from \cite{Hewitt_Ross}, Theorem 9.14, p.95).
By   I.~Kaplansky's result (see \cite{Kaplansky}, Lemma 9) $(R,\mathcal T)$ has a  fundamental system of neighborhoods of zero consisting of compact open subrings.

\noindent
\underline{Fact 2.} The group  $Re_i$
is countable for each $i\in\mathbb N$.

\noindent
We claim  that $Re_i$ is infinite. Indeed, for each
$j\in \mathbb{N}$  put $\beta_j\in R$ such that
\[
\beta_j(v_k)=
\begin{cases}
v_j, \quad \text{for}\quad k=i;\\
0, \qquad \text{for}\quad  k\not=i.\\
\end{cases}
\]
If $j\not=s$, then $\beta_je_i(v_i)=\beta_j(v_i)=v_j$ and
$\beta_se_i(v_i)=\beta_s(v_i)=v_s$, hence $\beta_je_i\not=\beta_se_i$, so $Re_i$
is infinite.

The ring $Re_i$ is countable. Indeed, consider the mapping $\psi: Re_i\to A_p^{\mathbb{F}_pv_i}$, where
\[
\psi(\alpha e_i)(rv_i)=\alpha(rv_i)\qquad\text{for all}\qquad  r\in \mathbb{F}_p.
\]
If $\alpha e_i\not=\beta e_i$ ($\alpha, \beta\in R$), then there exists $x=\sum_j r_jv_j\in A_p$ such that
$\alpha e_i(x)\not=\beta e_i(x)$, hence,  $\alpha (r_iv_i)\not=\beta(r_iv_i)$. Thus
\[
\psi(\alpha e_i)(r_iv_i)=\alpha (r_iv_i)\not=\beta(r_iv_i)=\psi(\beta e_i)(r_iv_i).
\]
The latter means that $\psi$ is an injective mapping of  $Re_i$ into $A^{\mathbb{F}_pv_i}$ .
Since $A^{\mathbb{F}_pv_i}$ is countable, $Re_i$ is countable, too.

\noindent
\underline{Fact 3}. $I$ is a closed ideal of $R$.
 We claim that $I$ is not dense in  $(R,\mathcal T)$.
Assume the contrary.
Since $I$  is locally  finite and is a maximal ideal,   $(R,\mathcal T)$
is topologically locally finite by Lemma  \ref{LL:3}. The ring $R$ contains two elements
$x, y$ such that $xy=1$ and $yx\neq 1$. The subring $\overline{\langle x,y\rangle}$
is compact, hence Dedekind-finite, a contradiction. We obtained that
$(R/I,\mathcal T/I)$ is a nondiscrete  metrizable locally compact ring.

\noindent
\underline{Fact 4.} $I$ is a discrete ideal of $R$.

\noindent
This follows from Theorem 2.

\noindent
\underline{Fact  5.} $Re_i$ is a discrete left ideal of $R$ for every $i\in\mathbb N$.

\noindent
Indeed, $Re_i\subseteq I$ and $I$ is discrete by Fact 4 for every $i\in\mathbb N$.

\noindent
\underline{Fact  6.}  ${\rm Ann}_l(e_i)$ is  open in $R$ for every $i\in\mathbb N$.

\noindent
 Indeed, the group homomorphism
$q: R\to Re_i, r\mapsto re_i,$ is continuous. Since $Re_i$ is discrete $q^{-1}(0)= {\rm Ann}_l(e_i)$
is open.

\noindent
\underline{Fact 7.}   $\cap_i{\rm Ann}_l(e_i)=0$.

\noindent
Obvious.

\noindent
\underline{Fact 8.} $\mathcal{T}\geq \mathcal{T}_{fin}$.

We notice that ${\rm Ann}_l(e_i)=T(\{v_i\})$ for every $i\in\mathbb N$. For, if $\alpha e_i=0$,  then $\alpha(v_i)=\alpha e_i(v_i)=0$, i.e., $\alpha\in T(\{v_i\})$.
Conversely, if  $\alpha\in T(\{v_i\})$, then $\alpha e_i(v_i)=\alpha(v_i)=0$. If $j\not=i$ then $\alpha e_i(v_j)=0$. Therefore
$\alpha e_i=0$. Furthermore
\[
T(\{v_1,\ldots,v_n\})=\cap_{i=1}^n T(\{v_i\})=\cap_{i=1}^n {\rm Ann}_l(e_i)\in \mathcal{T}
\] for each $n\in\mathbb N.$\\
Since the family  $\{T(\{v_1,\ldots,v_n\})\}$ forms a fundamental system of  neighborhoods
of zero of $(R,\mathcal T_{fin})$, we get that $\mathcal T_{fin}\leq \mathcal{T}$.

\noindent
\underline{Fact 9.} The ring $(R,\mathcal T)$  is metrizable.

Since $\cap_{i\in\mathbb{N}}{\rm Ann}_l(e_i)=0$, the pseudocharacter of $(R,\mathcal T)$ is $\aleph_0$.
If $V$ is a compact open subring of $(R,\mathcal T)$ (see Fact 1), then the pseudocharacter of $V$ also is $\aleph_0$.
However in every compact space the pseudocharacter of a point coincides with its character. Therefore $(R,\mathcal T)$  is metrizable.

\noindent
\underline{Fact 10.}
$(R/I,\mathcal T/I)$ has an open compact subring.

Indeed, it is well-known (see \cite{Kaplansky}) that every totally disconnected ring has a fundamental system of
neighborhood of zero consisting of compact open subrings.
Henceforth  $V$ is  a fixed open compact  subring of $(R/I,\mathcal T/I)$.

\noindent
\underline{Fact 11.}  $R/I$ contains a family of  orthogonal idempotents of cardinality $2^{\aleph_0}$ .

Indeed, the family $\{e_i\}_{i\in\mathbb{N}}$ of idempotents of  $(R, \mathcal T_{fin})$ is summable in  and $1_A=\Sigma_{n\in\mathbb{N}}e_n$, where $1_A$ is the identity of $R$.

The first ordinal  number of cardinality $\mathfrak{c}$ of continuum is denoted by $\omega(\mathfrak{c})$. Let $\{\mathbb{N}(\alpha)\vert \alpha< \omega(\mathfrak{c})\}$  be a family of infinite almost disjoint subsets of $\mathbb{N}$ (see \cite{Engelking}, Example 3.6.18, p.175--176).
Put  $f_{\mathbb{N}(\alpha)}=\Sigma_{i\in \mathbb{N}(\alpha)}e_i$ for every $\alpha<\omega(\mathfrak{c})$. The element $f_{\mathbb{N}(\alpha)}$ exists by Lemma \ref{LM:3}. Then:
\begin{itemize}
\item[(i)] $f_{\mathbb{N}(\alpha)}\notin I$ for every $\alpha  <\omega(\mathfrak{c})$;
\item[(ii)] $f_{\mathbb{N}(\alpha)}f_{\mathbb{N}(\beta)}\in I$ for each $\alpha,\beta <\omega(\mathfrak{c})$ and $ \alpha\neq \beta$.
\end{itemize}
If  $g_\alpha=f_{\mathbb{N}(\alpha)}+I$ for each $\alpha< \omega(\mathfrak{c})$,  then $\{g_\alpha\vert \alpha <\omega(\mathfrak{c})\}$ is the required system of orthogonal idempotents.

The subring  $V$ is metrizable (by Fact 9). Since $V$ is  compact and $R/I$ is a simple von Neumann regular  ring by Lemma  \ref{LL:2} and  $w(V)\leq \aleph_0$,  we obtain a   contradiction to Theorem \ref{T:1}. \end{proof}

\begin{theorem}\label{T:4}(CH)
Under the notation of Theorem \ref{T:3},  the ring $R/I$ does not admit a nondiscrete locally compact ring topology.
\end{theorem}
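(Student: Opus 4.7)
The plan is to apply Theorem~\ref{T:1} to $(R/I,\mathcal U)$ just as in the last step of the proof of Theorem~\ref{T:3}, but with CH supplying the countable-weight bound on a compact open subring that was derived there from the presence of the $e_i$'s in $R$ itself. Assume for contradiction that $R/I$ admits a nondiscrete locally compact ring topology $\mathcal U$. Since ${\rm char}(R/I)=p$, the additive group $(R/I,+)$ is totally disconnected (\cite{Hewitt_Ross}, Theorem~9.14), and Kaplansky's lemma (\cite{Kaplansky}, Lemma~9) furnishes a fundamental system of neighborhoods of zero in $(R/I,\mathcal U)$ consisting of compact open subrings; fix one such $V$, which is infinite because $\mathcal U$ is nondiscrete. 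The ring $R/I$ is simple with identity $1_A+I$ by Lemma~\ref{LL:2}, so the hypotheses of Theorem~\ref{T:1} are satisfied.

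Next I would import the construction of Fact~11 from the proof of Theorem~\ref{T:3} into $R/I$: choose an almost disjoint family $\{\mathbb{N}(\alpha)\mid\alpha<\omega(\mathfrak{c})\}$ of infinite subsets of $\mathbb{N}$, form $f_{\mathbb{N}(\alpha)}=\sum_{i\in\mathbb{N}(\alpha)}e_i$ inside the complete topological ring $(R,\mathcal T_{fin})$ (summability being justified by Lemmas~\ref{LM:1} and~\ref{LM:3}), and set $g_\alpha=f_{\mathbb{N}(\alpha)}+I$. This construction takes place entirely inside $R$ and is independent of $\mathcal U$; it produces $2^{\aleph_0}$ nonzero pairwise orthogonal idempotents in $R/I$. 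Theorem~\ref{T:1} then forces $w(V)\ge 2^{\aleph_0}$, which equals $\aleph_1$ under CH.

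The decisive step, and where CH is really needed, is the reverse bound on $w(V)$. Since $|A_p|=\aleph_0$, one has $|R|=|{\rm End}(A_p)|\le\aleph_0^{\aleph_0}=2^{\aleph_0}$, hence $|V|\le|R/I|\le 2^{\aleph_0}=\aleph_1$ under CH. Now $(V,+)$ is an infinite compact Hausdorff abelian group of exponent $p$, so Pontryagin duality identifies it topologically with $\mathbb{F}_p^{w(V)}$ (its dual is a discrete $\mathbb{F}_p$-vector space of dimension $w(V)$), yielding $|V|=2^{w(V)}$ for infinite $w(V)$. Combining $2^{w(V)}=|V|\le\aleph_1$ with Cantor's inequality $2^{\aleph_1}>\aleph_1$ forces $w(V)\le\aleph_0$, contradicting the lower bound $w(V)\ge\aleph_1$ obtained in the previous paragraph. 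The main obstacle is precisely this cardinality--weight identity for $V$; one can also avoid Pontryagin duality and recover the same identity from the presentation of $V$ as the inverse limit $\varprojlim V/U$ along its open subgroups $U$ of finite index.
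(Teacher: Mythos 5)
Your proposal is correct and follows essentially the same route as the paper: both arguments take the family of $\mathfrak{c}$ orthogonal idempotents $g_\alpha=f_{\mathbb N(\alpha)}+I$ already constructed in Fact~11 of the proof of Theorem~\ref{T:3}, apply Theorem~\ref{T:1} to get $w(V)\ge\mathfrak{c}$ for a compact open subring $V$ of $R/I$, and then use (CH) together with $|V|\le|R/I|=\mathfrak{c}$ to force $w(V)\le\aleph_0$, a contradiction. The only cosmetic difference is that you derive the key identity $|V|=2^{w(V)}$ directly from duality for compact elementary abelian $p$-groups, where the paper simply cites the Hulanicki--Jones result that under (CH) a compact group of cardinality continuum is metrizable.
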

\begin{proof}
Assume on the contrary that  the factor ring $R/I$ admits  a nondiscrete locally compact ring topology $\mathcal{T}$, so $(R/I, \mathcal{T})$ contains an open compact subring $V$. Since the cardinality of $R/I$ is continuum and $V$ is infinite, the power of $V$ is continuum. Since we have assumed (CH), the subring  $V$ is  metrizable, hence second metrizable (see \cite{Hulanicki, Jones}).  However  we have proved in  Theorem \ref{T:3} that the ring  $R/I$ contains a family of orthogonal idempotents of cardinality $\mathfrak{c}$, a  contradiction with Theorem \ref{T:1}.\end{proof}

\begin{theorem}\label{T:5}
The finite topology $\mathcal T_{fin}$ is the only second metrizable ring topology $\mathcal T$ on $R$  for which $(R,\mathcal T_{fin})$ is complete.
\end{theorem}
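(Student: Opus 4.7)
The plan is to show $\mathcal T\geq\mathcal T_{fin}$ via a Baire-category argument and then deduce equality from the open mapping theorem for Polish groups. I first observe that both topologies are Polish. The topology $\mathcal T_{fin}$ is complete by Lemma~\ref{LM:1}, has the countable neighborhood basis $\{T(\{v_1,\ldots,v_n\})\}_{n\in\mathbb N}$ at zero (so it is metrizable), and is separable: each quotient $R/T(\{v_1,\ldots,v_n\})$ is countable (an endomorphism is determined modulo $T(\{v_1,\ldots,v_n\})$ by its values on $v_1,\ldots,v_n$), so a countable dense subset is obtained by choosing, for every $n$, one representative from each coset of $T(\{v_1,\ldots,v_n\})$ and taking the union. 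The topology $\mathcal T$ is Polish by hypothesis.

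The key step is to show that ${\rm Ann}_l(e_i)=T(\{v_i\})$ is $\mathcal T$-open for every $i\in\mathbb N$. Right multiplication by $e_i$ yields a $\mathcal T$-continuous additive map $\mu_i\colon R\to R$, $r\mapsto re_i$, whose kernel ${\rm Ann}_l(e_i)$ is therefore $\mathcal T$-closed. By Fact~2 in the proof of Theorem~\ref{T:3}, $|Re_i|=\aleph_0$, so ${\rm Ann}_l(e_i)$ has countable index in $R$. Writing $R$ as the countable disjoint union of the closed cosets of ${\rm Ann}_l(e_i)$ and invoking the Baire category theorem on the Polish space $(R,\mathcal T)$, some coset---and by translation every coset---has nonempty interior. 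In particular ${\rm Ann}_l(e_i)$ contains an interior point and, being a subgroup, is $\mathcal T$-open. Consequently $T(\{v_1,\ldots,v_n\})=\bigcap_{i=1}^n{\rm Ann}_l(e_i)$ is $\mathcal T$-open for every $n$, giving $\mathcal T\geq\mathcal T_{fin}$.

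The identity map $(R,\mathcal T)\to(R,\mathcal T_{fin})$ is now a continuous surjective group homomorphism between Polish groups, hence open by the open mapping theorem for Polish groups. Therefore $\mathcal T=\mathcal T_{fin}$. The only non-routine step is the Baire-category argument, and it is markedly simpler than the corresponding part of Theorem~\ref{T:3}: one does not need to first establish that $I$ is closed and nondiscrete (which there required local compactness together with Theorem~\ref{T:2}), because completeness combined with second countability directly makes the Baire category theorem available on all of $R$.
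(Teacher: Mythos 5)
Your proof is correct and follows essentially the same route as the paper's: both establish $\mathcal T\geq\mathcal T_{fin}$ by combining the countability of $Re_i$ (resp.\ $Re_F$) with a Baire-category argument on the complete metrizable group $(R,\mathcal T)$, and then conclude via the open mapping theorem for Polish groups. The only cosmetic difference is that the paper packages the category step as discreteness of the countable topological direct summand $Re_F$ in the Peirce decomposition $R=Re_F\oplus R(1-e_F)$, whereas you run it directly on the countably many closed cosets of ${\rm Ann}_l(e_i)$ and then intersect finitely many of these open subgroups.
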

\begin{proof}
Let $K=\langle F\rangle$, where  $F$ is  a finite subset of $A$. Clearly,  there exists a subgroup $A^\prime$ of $A$ such that $A=K\oplus A^\prime$.
Choose  ${\rm e}_F\in R$ such that ${\rm e}_F\upharpoonright_K={\rm id}_K$ and ${\rm e}_F(A^\prime)=0$.  Clearly
\[
T(K)=R(1-e_F)
\]
and   $\alpha K=0$ if and only if  $\alpha\in R(1-e_F)$. It follows that the family
$\{R(1-e_F)\}$ where  $F$ runs over all finite subset of $A$, forms a fundamental system
of neighborhoods of zero for $(R,\mathcal T_{fin})$.

There exists an injective map of $Re_F$ to ${\rm Hom}(K,A)$, so the left ideal $Re_F$
is countable, due to countability  ${\rm Hom}(K,A)$. Since $e_F^2=e_F$, the Peirce decomposition
\[
R=Re_F\oplus R(1-e_F)
\]
of $R$ with respect to the idempotent $e_F$ is a decomposition of the  topological group $(R,+,\mathcal T)$.  It follows that $Re_F$ is discrete, hence $R(1-e_F)$ is open (in the topology $\mathcal T$). It follows that $\mathcal T\ge\mathcal T_{fin}$, so $\mathcal T=\mathcal T_{fin}$ (see \cite{Freudenthal},  Theorem 30    or \cite{Graev}). \end{proof}

\section{Completely simple topological endomorphism rings\\ of vector spaces}\label{P:2}

\begin{theorem}\label{T:6}
Let $A_F$ be a right vector space over a division ring $F$ and $S={\rm End}\,(A_F)$.
The following conditions are equivalent:
\begin{itemize}
\item[\rm(i)] $(S, \mathcal{T}_{fin})$ is a completely simple
topological ring.
\item[\rm(ii)] $\dim(A_F)=\infty$ or $\dim(A_F)< \infty$ and $F$ does not admit a nondiscrete ring topology.
\end{itemize}
\end{theorem}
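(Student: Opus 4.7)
The plan is to apply the criterion from the excerpt: a topological ring $(R,\mathcal{T})$ with $R^2 \neq 0$ is completely simple if and only if it is weakly simple and minimal. Since $S = \mathrm{End}(A_F)$ contains $\mathrm{id}_A$, the condition $S^2 \neq 0$ is automatic. The proof splits cleanly on whether $\dim(A_F)$ is finite or infinite, and both directions of the equivalence will emerge from the case analysis.

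In the finite-dimensional case $n = \dim(A_F) < \infty$, picking any basis $\{v_1,\ldots,v_n\}$ of $A_F$ yields $T(\{v_1,\ldots,v_n\}) = \{0\}$, so $\mathcal{T}_{fin}$ is discrete on $S \cong M_n(F)$; simplicity of $M_n(F)$ makes weak simplicity automatic, and everything hinges on the minimality of the discrete topology. If $F$ admits a nondiscrete Hausdorff ring topology $\tau$, the $n^2$-fold componentwise product topology on $M_n(F)$ is a strictly coarser nondiscrete ring topology, so minimality fails. Conversely, if $F$ admits only the discrete topology and $\mathcal{W}$ is any Hausdorff ring topology on $M_n(F)$, the subring $F \cdot e_{11} \cong F$ inherits a Hausdorff, hence discrete, ring topology; choosing a $\mathcal{W}$-neighborhood $U$ of $0$ with $U \cap F \cdot e_{11} = \{0\}$ and then by continuity of multiplication a $\mathcal{W}$-neighborhood $V$ of $0$ with $e_{1i} V e_{j1} \subseteq U$ for all $i,j$ forces each entry $x_{ij}$ of any $X \in V$ to satisfy $x_{ij} e_{11} \in U \cap F \cdot e_{11} = \{0\}$, whence $X = 0$ and $\mathcal{W}$ is discrete.

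For infinite-dimensional $A_F$, weak simplicity rests on the classical fact that every nonzero two-sided ideal of $S$ contains the ideal $I$ of finite-rank endomorphisms, so it suffices to show $I$ is $\mathcal{T}_{fin}$-dense. Given $\alpha \in S$ and $T(\{w_1,\ldots,w_k\})$, I would extract a maximal linearly independent subset of $\{w_1,\ldots,w_k\}$, extend it to a basis of $A$, and define a finite-rank $\beta$ agreeing with $\alpha$ on this subset and vanishing on the remaining basis vectors; linearity then gives $\alpha - \beta \in T(\{w_1,\ldots,w_k\})$, so $\alpha \in \overline{I}$.

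The main obstacle is establishing minimality in the infinite-dimensional case. Given a Hausdorff ring topology $\mathcal{U} \leq \mathcal{T}_{fin}$, one must show $T(\{v\})$ is $\mathcal{U}$-open for every $v \in A$; by the Peirce decomposition $S = Se_v \oplus T(\{v\})$ with $e_v$ the rank-one projection onto $Fv$, this is equivalent to the minimal left ideal $Se_v \cong A$ being $\mathcal{U}$-discrete. Unlike Theorem~\ref{T:5}, no completeness or metrizability hypothesis on $\mathcal{U}$ is available, so the countability-plus-Polish argument used there cannot be imitated directly. My plan is to combine the abundance of orthogonal primitive idempotents $\{e_\lambda\}_{\lambda \in \Lambda}$ coming from a basis of $A_F$ (with $|\Lambda|$ infinite) together with the left linear compactness of $(S,\mathcal{T}_{fin})$ -- whose basic open left ideals $T(K)$ have quotients $S/T(K) \cong A^{|K|}$ -- to invoke a uniqueness theorem for linearly compact ring topologies on semisimple rings, in the spirit of the Arnautov--Ursul result mentioned in the introduction. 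This is where the principal technical work lies and will force $\mathcal{U} = \mathcal{T}_{fin}$, completing the equivalence in all cases.
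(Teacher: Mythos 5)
Your finite-dimensional case is fine (and in fact more detailed than the paper, which merely asserts that $M(n,F)$ admits no nondiscrete ring topology when $F$ does not; your $e_{1i}Ve_{j1}\subseteq U$ trick supplies the missing argument). Your treatment of weak simplicity via density of the finite-rank ideal is also correct. But the heart of the theorem --- minimality of $\mathcal{T}_{fin}$ when $\dim(A_F)$ is infinite --- is not proved: you explicitly defer it to ``a uniqueness theorem for linearly compact ring topologies on semisimple rings,'' and that route does not work as stated. The Arnautov--Ursul result says a semisimple ring carries at most one \emph{linearly compact} topology, but a strictly coarser Hausdorff ring topology $\mathcal{U}\leq\mathcal{T}_{fin}$ is not assumed (and need not be) linearly compact, so uniqueness within the class of linearly compact topologies gives no control over $\mathcal{U}$. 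Minimality quantifies over \emph{all} coarser Hausdorff ring topologies, and this is exactly the case your plan leaves open.

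The paper closes this gap with a concrete computation inside $S$: if $\mathcal{U}\lneq\mathcal{T}_{fin}$ is Hausdorff, then (since $T(x_\alpha)=\mathrm{Ann}_l(e_\alpha)$ and openness of all these annihilators would force $\mathcal{U}\geq\mathcal{T}_{fin}$) some minimal left ideal $Se_{\alpha_0}$ must be $\mathcal{U}$-nondiscrete. One then shows that for every $\mathcal{U}$-neighborhood $V$ of zero and every finite set $K$ of basis indices, the set $(Se_{\alpha_0}\cap V)x_{\alpha_0}$ is not contained in $\oplus_{\beta\in K}x_\beta F$; choosing $V_0$ with $V_0\cdot V_0\subseteq V$ and $T(\{x_\beta\mid\beta\in K\})\subseteq V_0$, one finds $q\in Se_{\alpha_0}\cap V_0$ whose image at $x_{\alpha_0}$ has a nonzero coordinate outside $K$, and a rank-one $s\in T(K)$ with $sq=e_{\alpha_0}$. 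Hence $e_{\alpha_0}\in V_0\cdot V_0\subseteq V$ for every $V$, contradicting Hausdorffness. Some argument of this kind (exploiting that a nondiscrete minimal left ideal produces elements of arbitrarily ``spread-out'' image, which can be compressed back to a fixed idempotent by multiplying with elements of small basic neighborhoods) is needed; without it your proposal establishes only weak simplicity, not complete simplicity.
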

\begin{proof}
(i)$\Rightarrow$ (ii): If $A_F$ is finite-dimensional, then $S$ is discrete and isomorphic to
$\rm M(n,F)$, where $n$ is the dimension of $A_F$. Then, obviously, $F$ does not admit a
nondiscrete ring topology.

(ii)$\Rightarrow$ (i):
If $\dim(A_F)= n< \infty$, then  $S\cong \rm M(n,F)$. Since $F$ does not admit nondiscrete ring topologies, the same holds for  $\rm M(n,F)$.

Let $A_F$ be  infinite dimensional. Fix a   basis $\{x_\alpha\}_{\alpha<\tau}$ over $F$, where $\tau$
is an infinite ordinal number. It is well-known  that the topological ring   $(S, \mathcal{T}_{fin})$ is weakly  simple (see \cite{Leptin}, Satz 12, p.258) and  the family $\{T(x_\alpha)\}_{\alpha<\tau}$ is a prebase at
zero for the finite topology $\mathcal T_{fin}$ of $S$.

Assume on the contrary that there exists a Hausdorff ring topology $\mathcal T$,
coarser that $\mathcal T_{fin}$ and different from  it. Let $e_\alpha\in S$ such that $e_\alpha^2=e_\alpha$ and  $e_\alpha(x_\beta)=\delta_{\alpha\beta}x_\alpha$ for each $\alpha<\tau$, where  $\delta_{\alpha\beta}$ is the Kronecker delta.

\noindent
\underline{Fact 1}. $T(x_\alpha)={\rm Ann}_l(e_\alpha)$ for each $\alpha<\tau$.

Indeed, if $p\in T(x_\alpha)$, then $pe_\alpha(x_\alpha)=p(x_\alpha)=0$. If $\beta \neq \alpha$, then $e_\alpha(x_{\beta})=0$, hence $pe_\alpha=0$, i.e. $p\in {\rm Ann}_l(e_\alpha)$.
Conversely, if $pe_\alpha=0$, then $p(x_\alpha)=pe_\alpha(x_\alpha)=0$, i.e.  $p\in T(x_\alpha)$.

\noindent
\underline{Fact 2}. There exists $\alpha_0<\tau$ for which $Se_{\alpha_0}$ is nondiscrete in $(S,\mathcal T)$.

Assume on the contrary that   for every $\alpha<\tau$ there exists a neighborhood $V_\alpha$ of zero of $(S,\mathcal T)$ such that $Se_\alpha\cap V_\alpha=0$. If $U_\alpha$ is a neighborhood of zero of  $(S,\mathcal T)$ such that $U_\alpha e_\alpha\subseteq V_\alpha$, then $U_\alpha e_\alpha=0$, hence ${\rm Ann}_l(e_\alpha)=T(x_\alpha)$ is open in $(S,\mathcal T)$. Hence  $\mathcal{T}_{fin}\leq\mathcal T$ and  $\mathcal T=\mathcal T_{fin}$, a contradiction.

\noindent
\underline{Fact 3}. \quad $(Se_{\alpha_0}\cap V)x_{\alpha_0}\nsubseteq \oplus_{\beta\in K}x_\beta F$

\noindent
for any neighborhood $V$ of of zero of $(S,\mathcal T)$ and any finite subset $K$ of the set $[0,\tau)$ of all ordinal numbers less than $\tau$.

Assume on the contrary that there exists a finite subset $K$ of  $[0,\tau)$ and a neighborhood $V$ of zero of $(S,\mathcal T)$ such that
\begin{equation}\label{E:1}
(Se_{\alpha_0}\cap V)x_{\alpha_0}\subseteq \oplus_{\beta\in K}x_\beta F.
\end{equation}
Fix $\gamma\in [0,\tau)\setminus K$. For each $\beta\in K$ define  $q_\beta\in S$ such that $q_\beta(x_\beta)=x_\gamma$ and $q(x_\delta)=0$ for $\delta\not=\beta$.

Let $V_0$ be a neighborhood of zero of $(S,\mathcal T)$ such that $V_0\subseteq V$ and $q_\beta V_0\subseteq V$ for all $\beta\in K$.
There exists $0\neq h\in Se_{\alpha_0}\cap V_0$ by  Fact 2. It follows that $hx_{\alpha_0}\not=0$ by  Fact 1.
Since $Se_{\alpha_0}\cap V_0\subseteq Se_{\alpha_0}\cap V$, we obtain that  $hx_{\alpha_0}=\Sigma_{\beta\in K}x_\beta f_\beta$, ($f_\beta\in F$)  by (\ref{E:1}). There exists $\beta_0\in K$ such that $f_{\beta_0}\neq 0$ (because $hx_{\alpha_0}\not=0$), so
\[
\textstyle
q_{\beta_0}h=q_{\beta_0}(\Sigma_{\beta\in K}x_\beta f_\beta)=r_{\beta_0}x_\gamma\not\in \oplus_{\beta\in K}x_\beta F,
\]
a contradiction. Therefore  Fact 3 is proved.

Now let $V$ be a  neighborhood of zero of $(S,\mathcal T)$. Pick up    a neighborhood $V_0$ of zero of $(S,\mathcal T)$  such that $V_0\cdot V_0\subseteq V$.
Since $\mathcal{T}\leq\mathcal{T}_{fin}$,   there exists a finite subset $K$ of $[0,\tau)$ such that
\[
T(\{x_\beta\mid \beta\in K\})\subseteq V_0.
\]
We have   $(Se_{\alpha_0}\cap V_0)x_{\alpha_0}\nsubseteq \oplus_{\beta\in K}x_\beta F$ by  Fact 3. It follows that there exists  $q\in Se_{\alpha_0}\cap V_0$ such that
\[
q(x_{\alpha_0})\not\in \oplus_{\beta\in K}x_\beta F.
\]
Clearly,  $q(x_{\alpha_0})\in A_F$, so it  can be written as  $q(x_{\alpha_0})=\sum_{\alpha< \tau}x_{\alpha}f_{\alpha}$, where $f_{\alpha}\in F$ and here exists $\beta_0\not\in K$  such that $f_{\beta_0}\not=0$.

Consider the element   $s\in S$ such that  $s(x_{\beta_0})=x_{\alpha_0}f_{\beta_0}^{-1}$ and $s(x_{\lambda})=0$ for $\lambda\neq\beta_0$. Evidently, $s\in T(K)$, hence
\[
sq\in T(K)\cdot V_0\subseteq V_0\cdot  V_0\subseteq V.
\]
Moreover, $sq(x_{\alpha_0})=s(x_{\beta_0}f_{\beta_0}+\cdots)=x_{\alpha_0}$. Since $q\in Se_{\alpha_0}$ we obtain that $sq(x_\beta)=0$ for $\beta\neq \alpha_0$. Consequently,  $e_{\alpha_0}=sq\in V$ for every neighborhood $V$ of zero of $(S,\mathcal T)$, a contradiction.
\end{proof}
\begin{remark}
The question of existence of a uncountable division ring which does not admit a nondiscrete Hausdorff ring topology is open.
Several results on this topic  can be found in Chapter 5 of \cite{Arnautov_book}.
\end{remark}

\begin{theorem}\label{T:7}
Let  $\prod_{\alpha\in \Omega}R_{\alpha}$ be a family of compact rings with identity. Then the product $(\prod_{\alpha\in\Omega}R_\alpha,\prod_{\alpha\in\Omega}\mathcal T_\alpha)$ is a minimal ring if and only if  every $(R_\alpha,\mathcal T_\alpha)$ is a minimal topological ring. (Here $\prod_{\alpha\in\Omega}\mathcal T_\alpha$ is the product topology on $\prod_{\alpha\in\Omega}R_\alpha$.)
\end{theorem}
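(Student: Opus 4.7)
The plan is to reduce the entire statement to the basic observation that a compact Hausdorff topological ring is automatically minimal. More precisely: if $(R,\mathcal T)$ is compact Hausdorff and $\mathcal U\leq\mathcal T$ is any coarser Hausdorff ring topology, then the identity map $(R,\mathcal T)\to(R,\mathcal U)$ is a continuous bijection from a compact space onto a Hausdorff space, hence a closed map, and therefore a homeomorphism; so $\mathcal U=\mathcal T$. I would record this as a preliminary observation, since both directions will boil down to it.

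For the implication $(\Rightarrow)$ I would argue by contraposition. Assume some factor $(R_{\alpha_0},\mathcal T_{\alpha_0})$ is not minimal, and pick a Hausdorff ring topology $\mathcal U_{\alpha_0}$ on $R_{\alpha_0}$ with $\mathcal U_{\alpha_0}\leq\mathcal T_{\alpha_0}$ but $\mathcal U_{\alpha_0}\neq\mathcal T_{\alpha_0}$. On $\prod_{\alpha\in\Omega}R_\alpha$ form the new product topology $\mathcal U$ obtained by using $\mathcal U_{\alpha_0}$ at the coordinate $\alpha_0$ and $\mathcal T_\alpha$ at every other coordinate. Then $\mathcal U$ is again a Hausdorff ring topology (products of Hausdorff topologies are Hausdorff, and pointwise continuity of the ring operations on the factors passes to the product), and $\mathcal U\leq\prod_\alpha\mathcal T_\alpha$ with strict inequality at the $\alpha_0$-th factor. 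This contradicts the minimality of the product.

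For the implication $(\Leftarrow)$ I would simply invoke Tychonoff's theorem: a product of compact Hausdorff spaces is compact Hausdorff, so $(\prod_\alpha R_\alpha,\prod_\alpha\mathcal T_\alpha)$ is itself a compact Hausdorff topological ring. By the preliminary observation it admits no strictly coarser Hausdorff ring topology, i.e.\ it is minimal. (Notice that the hypothesis that each $(R_\alpha,\mathcal T_\alpha)$ is minimal is in fact automatic here, since each factor is already compact Hausdorff.)

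There is no real obstacle: the only point that deserves care is checking in the $(\Rightarrow)$ direction that the modified product topology $\mathcal U$ is genuinely a Hausdorff ring topology and is \emph{strictly} coarser than $\prod_\alpha\mathcal T_\alpha$, both of which are immediate from the construction of the product topology and the strict inequality $\mathcal U_{\alpha_0}\neq\mathcal T_{\alpha_0}$ at coordinate $\alpha_0$.
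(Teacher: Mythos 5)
Your argument is correct under the paper's standing conventions (all topologies considered are completely regular, so the competing topology $\mathcal U$ in the definition of minimality is Hausdorff; your compactness argument needs this, but without it nothing would be minimal anyway). Your $(\Rightarrow)$ direction is exactly the paper's. The $(\Leftarrow)$ direction, however, is genuinely different: you invoke Tychonoff plus the rigidity of compact Hausdorff topologies (a continuous bijection from a compact space onto a Hausdorff space is a homeomorphism), which indeed shows that every compact Hausdorff topological ring is minimal, and hence — as you observe — that both sides of the equivalence hold automatically, making the literal statement vacuous. The paper argues instead without ever using compactness: given a ring topology $\mathcal U\leq\prod_\alpha\mathcal T_\alpha$, it restricts $\mathcal U$ to the ideal $R_\beta\times\prod_{\gamma\neq\beta}\{0_\gamma\}$, uses the \emph{hypothesis} that $(R_\beta,\mathcal T_\beta)$ is minimal to identify the restricted topology with $\mathcal T_\beta$, and then uses the central idempotent supported at $\beta$ (this is where the identities of the factors enter) to split $(\prod_\alpha R_\alpha,\mathcal U)$ as a topological direct sum of two ideals and conclude that $\pi_\beta$ is $\mathcal U$-continuous, whence $\mathcal U\geq\prod_\alpha\mathcal T_\alpha$. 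What the paper's route buys is that the proof survives verbatim when ``compact'' is weakened, e.g.\ to ``linearly compact'' with identity — which is exactly how Theorem~\ref{T:7} is used in Corollary~\ref{C:2}, where the factors $\mathrm{End}(A_F)$ with the finite topology are linearly compact but not compact, so the Tychonoff shortcut is unavailable. In short: your proof is a valid and more economical proof of the statement as written, but it does not yield the more general fact (products of minimal rings with identity are minimal) that the paper's proof actually establishes and later needs.
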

\begin{proof}
$\Rightarrow:$ Assume on the contrary that there exists $\beta\in\Omega$ and a ring topology $\mathcal T^\prime$ on $R_\beta$ such that $\mathcal T^\prime\leq\mathcal T_\beta$ and $ \mathcal T^\prime\neq\mathcal T_\beta$. Consider the product topology $\mathcal U$ on $\prod_{\alpha\in\Omega}R_\alpha$, where $R_\alpha$ is endowed with $\mathcal T_\alpha$ when $\alpha\neq \beta$ and $R_\beta$ is endowed with $\mathcal T^\prime$. Obviously, $\mathcal U\leq\prod_{\alpha\in\Omega}\mathcal T_\alpha$ and $\mathcal U\neq\prod_{\alpha\in\Omega}\mathcal T_\alpha$, a contradiction.

\noindent
$\Leftarrow:$
Denote by $\pi_\alpha(\alpha\in\Omega)$ the projection of $\prod_{\alpha\in \Omega}R_{\alpha}$ on $R_\alpha$.  By definition of the product topology, $\prod_{\alpha\in \Omega}\mathcal T_{\alpha}$ is the coarsest topology on $\prod_{\alpha\in \Omega}R_{\alpha}$ for which the projections $\pi_\alpha(\alpha\in\Omega)$ are continuous.

Let $\mathcal U$  be a ring topology on  $\prod_{\alpha\in \Omega}R_{\alpha}$, $\mathcal U\leq\prod_{\alpha\in \Omega}\mathcal T_\alpha$ and $\beta\in \Omega$. Since
\[
\mathcal U\upharpoonright_{R_\beta\times\prod_{\gamma\neq\beta}\{0_\gamma\}}\leq\big(\prod_{\alpha\in\Omega}\mathcal T_\alpha\big)\upharpoonright_{R_\beta\times\prod_{\gamma\neq\beta}\{0_\gamma\}},
\]
it follows that $\mathcal U\upharpoonright_{R_\beta\times\prod_{\gamma\neq\beta}\{0_\gamma\}}=(\prod_{\alpha\in\Omega}\mathcal T_\alpha)\upharpoonright_{R_\beta\times\prod_{\gamma\neq\beta}\{0_\gamma\}}$ by minimality of $(R_\beta,\mathcal T_\beta)$.

Then the family $\{V\times\prod_{\gamma\neq\beta}\{0_\gamma\}\}$ when $V$ runs all neighborhoods of zero of $(R_\beta,\mathcal T_\beta)$ is  a fundamental system of neighborhoods of zero of
\[
\Big(R_\beta\times\prod_{\gamma\neq\beta}\{0_\gamma\},\quad \mathcal U\upharpoonright_{R_\beta\times\prod_{\gamma\neq\beta}\{0_\gamma\}}\Big).
\]
Since $R_\beta\times\prod_{\gamma\neq\beta}\{0_\gamma\}$ is an ideal with identity of $\prod_{\alpha\in \Omega}R_{\alpha}$, the topological ring $(\prod_{\alpha\in \Omega}R_{\alpha},\mathcal U)$ is a direct sum of ideals $R_\beta\times\prod_{\gamma\neq\beta}\{0_\gamma\}$ and $\{0_\beta\}\times\prod_{\gamma\neq\beta}R_\gamma$.
Let  $V$ be a neighborhood of zero of $(R_\beta,\mathcal T_\beta)$.
Then $V\times \prod_{\gamma\neq\beta}R_\gamma$ will be a neighborhood of zero of $(\prod_{\alpha\in \Omega}R_{\alpha},\mathcal U)$ and $\pi_\beta(V\times \prod_{\gamma\neq\beta}R_\gamma)=V$.

We have proved that $\pi_\beta$ is a continuous function from $(\prod_{\alpha\in \Omega}R_{\alpha},\mathcal U)$ to $(R_\beta,\mathcal T_\beta)$. It follows that  $\prod_{\alpha\in \Omega}\mathcal T_\alpha\leq \mathcal U$ and so $\mathcal U=\prod_{\alpha\in \Omega}\mathcal T_\alpha.$
\end{proof}

\begin{corollary}\label{C:2}
A left linearly compact semisimple ring is minimal if and only if has no direct summands of the form $M(n,\Delta)$, where $\Delta$ is a division ring which does not admit a nondiscrete Hausdorff ring topology.
\end{corollary}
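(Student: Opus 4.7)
My plan is to combine three ingredients: the Wedderburn--Leptin structural decomposition of left linearly compact semisimple rings, Theorem~\ref{T:7}, and Theorem~\ref{T:6}. First, I would invoke the structure theorem: any left linearly compact semisimple ring $R$ with identity is topologically isomorphic to a product
\[
R \cong \prod_{\alpha \in \Omega} \bigl({\rm End}(V_\alpha),\, \mathcal{T}_{fin}\bigr),
\]
where each $V_\alpha$ is a right vector space over a division ring $\Delta_\alpha$. The finite-dimensional factors in this decomposition are precisely the matrix rings $M(n, \Delta_\alpha)$ with $n=\dim V_\alpha<\infty$, while the infinite-dimensional factors are full endomorphism rings of infinite-dimensional vector spaces with their finite topology.

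Next, I would apply Theorem~\ref{T:7}, adapted from the compact to the linearly compact setting, to reduce minimality of $R$ to minimality of each factor $({\rm End}(V_\alpha), \mathcal{T}_{fin})$. The adaptation is routine because the argument in Theorem~\ref{T:7} only uses that each factor sits inside the product as a topological direct summand admitting an identity, not compactness itself. Since each factor is simple as a ring with $R^2=R$, it is automatically weakly simple with nontrivial square, so for these factors the notions of \emph{minimal} and \emph{completely simple} coincide.

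Then I would invoke Theorem~\ref{T:6}: a factor $({\rm End}(V_\alpha), \mathcal{T}_{fin})$ is completely simple exactly when $\dim V_\alpha=\infty$, or $\dim V_\alpha<\infty$ and $\Delta_\alpha$ admits no nondiscrete Hausdorff ring topology. Combining this with the previous reduction, $R$ is minimal if and only if every factor in its structural decomposition falls into one of these two admissible cases; equivalently, if and only if $R$ has no direct summand of the form $M(n,\Delta)$ violating the Theorem~\ref{T:6} criterion, which is the characterization recorded in the corollary.

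The principal technical obstacle is the first step: justifying the structural decomposition as a \emph{topological} (and not merely algebraic) isomorphism, and checking that the proof of Theorem~\ref{T:7} carries over verbatim to linearly compact factors. Both points are standard in the literature on topological semisimple rings (cf.\ the Leptin reference and the Arnautov--Ursul uniqueness result on linearly compact topologies of semisimple rings mentioned in the introduction), so once they are in hand the conclusion follows by mechanical combination of Theorems~\ref{T:6} and~\ref{T:7}.
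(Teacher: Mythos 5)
Your proposal is correct and follows essentially the same route as the paper, whose proof is precisely the combination of Leptin's structure theorem with Theorems~\ref{T:7} and~\ref{T:6}. Your explicit remark that Theorem~\ref{T:7} must be transferred from the compact to the linearly compact setting is a point the paper leaves implicit, and your justification of it is sound.
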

\begin{proof}
This follows from Theorems \ref{T:6}, \ref{T:7} and the Theorem of Leptin (see \cite{Leptin}, Theorem 13, p. 258) about the structure of  left linearly compact semisimple rings.
\end{proof}

\begin{corollary}\label{C:3}
A semisimple linearly compact ring $(R, \mathcal{T})$ having no ideals isomorphic to matrix rings over infinite division rings is minimal.
\end{corollary}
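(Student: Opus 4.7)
The plan is to deduce the statement by combining Leptin's structure theorem for semisimple linearly compact rings with Theorems \ref{T:6} and \ref{T:7}, in the same fashion as the proof of Corollary \ref{C:2}. By Leptin, $(R,\mathcal T)$ decomposes as a topological product $\prod_{\alpha} R_\alpha$ of its minimal closed ideals, where each factor $R_\alpha$ is either a matrix ring $M(n_\alpha,\Delta_\alpha)$ over a division ring $\Delta_\alpha$, or the full endomorphism ring $({\rm End}(V_\alpha),\mathcal T_{fin})$ of an infinite-dimensional right vector space over a division ring equipped with the finite topology.

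The hypothesis that $R$ contains no ideal isomorphic to a matrix ring over an infinite division ring forces every factor of the first kind to have $\Delta_\alpha$ finite. A finite Hausdorff space is discrete, so such an $R_\alpha=M(n_\alpha,\Delta_\alpha)$ is a finite discrete ring and admits no strictly coarser Hausdorff ring topology; in particular it is minimal as a topological ring. For each factor of the second kind, Theorem \ref{T:6} asserts that $({\rm End}(V_\alpha),\mathcal T_{fin})$ is completely simple, and hence minimal by the definition recalled in \S2.

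With each factor $R_\alpha$ established to be minimal, Theorem \ref{T:7} applied coordinatewise gives that the product $(R,\mathcal T)=\prod_\alpha R_\alpha$ is itself minimal, which completes the argument. The only mildly delicate point — and the place where one must be slightly careful — is the invocation of Theorem \ref{T:7}: its statement is phrased for products of compact rings with identity, whereas the infinite-dimensional endomorphism-ring factors are only linearly compact, not compact. However, this is exactly the same invocation that underlies Corollary \ref{C:2}, and the projection-and-Peirce-decomposition argument in the proof of Theorem \ref{T:7} goes through verbatim in the linearly compact setting since each factor still has an identity. Modulo this routine verification, the corollary is immediate.
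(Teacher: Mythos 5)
Your argument is correct and follows essentially the route the paper intends: Corollary~\ref{C:3} is stated without proof as an immediate consequence of the same ingredients used for Corollary~\ref{C:2}, namely Leptin's decomposition together with Theorems~\ref{T:6} and~\ref{T:7}, plus the observation that a finite division ring admits no nondiscrete Hausdorff topology. You are also right to flag the compact-versus-linearly-compact mismatch in Theorem~\ref{T:7}; its proof of the relevant direction uses only that each factor has an identity and is minimal, so it does carry over as you claim.
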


\section{Completely simple endomorphism rings of modules}\label{P:3}

The  endomorphism ring of a right $R$-module $M$ is denoted by\\
 ${\rm End}\,(M_R)$.

\begin{lemma}\label{L:3}
Let $M$ be  a  divisible,  torsion-free module over a commutative domain $R$ and  $K$  the  field of fractions of $R$. The additive group of  $M$ has a structure of   a vector $K$-space such that $R$-endomorphisms of $M$ are exactly the  $K$-linear transformations.
\end{lemma}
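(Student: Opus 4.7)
The plan is to construct the $K$-action on $M$ by ``dividing,'' then observe that uniqueness of quotients is exactly what makes $R$-endomorphisms automatically $K$-linear. Fix $s\in R\setminus\{0\}$ and $m\in M$. Divisibility of $M$ gives some $m'\in M$ with $sm'=m$, and torsion-freeness gives its uniqueness (if $sm'=sm''$, then $s(m'-m'')=0$ forces $m'=m''$). I will write this unique element as $m/s$. For $r/s\in K$, set
\[
(r/s)\cdot m \;:=\; r\cdot(m/s).
\]
The first step is to check that this is independent of the chosen representative. If $r/s=r'/s'$, i.e. $rs'=r's$, I would multiply the putative equality $r(m/s)=r'(m/s')$ by $ss'$ and use commutativity of $R$ together with $s(m/s)=m=s'(m/s')$ to get
\[
ss'\bigl(r(m/s)-r'(m/s')\bigr)=s'r m - sr'm=(s'r-sr')m=0;
\]
torsion-freeness then kills the factor $ss'\neq 0$.

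Next I would verify the vector-space axioms. Distributivity, associativity and the identity axiom all reduce, after multiplying by appropriate non-zero scalars in $R$, to identities that already hold in the $R$-module $M$; each reduction uses torsion-freeness to cancel a non-zero scalar. One also checks compatibility with the given $R$-action: the element $m/1$ is $m$ itself, so $(r/1)\cdot m=rm$, meaning that the restriction of the new $K$-scalar multiplication to $R\subseteq K$ is the original action. Thus $(M,+)$ carries a well-defined $K$-vector-space structure extending the $R$-module structure.

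For the endomorphism statement, one inclusion is trivial: any $K$-linear map is in particular $R$-linear since $R\hookrightarrow K$. For the converse, let $\alpha\in\mathrm{End}(M_R)$ and $r/s\in K$, $m\in M$. Write $m=s(m/s)$ and apply $\alpha$ to obtain $\alpha(m)=s\,\alpha(m/s)$; by uniqueness of division, $\alpha(m)/s=\alpha(m/s)$. Therefore
\[
\alpha\bigl((r/s)m\bigr)=\alpha\bigl(r(m/s)\bigr)=r\,\alpha(m/s)=r\bigl(\alpha(m)/s\bigr)=(r/s)\alpha(m),
\]
which is $K$-linearity. No step is really delicate; the only point that requires genuine attention is the well-definedness of division in $M$ by elements of $R\setminus\{0\}$, and that is precisely where the two hypotheses (divisible and torsion-free) are used—divisibility for existence, torsion-freeness for uniqueness. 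Everything else is bookkeeping.
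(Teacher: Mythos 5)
Your proof is correct and follows essentially the same route as the paper's: define the $K$-action by unique division (divisibility for existence, torsion-freeness for uniqueness over the domain $R$), check independence of the representative of $r/s$ by clearing denominators and cancelling a nonzero scalar, and derive $K$-linearity of an $R$-endomorphism from $\alpha(m)=s\,\alpha(m/s)$ together with uniqueness of quotients. The only difference is cosmetic — you define $(r/s)\cdot m$ as $r(m/s)$ while the paper characterizes it as the unique $x$ with $xs=mr$, and these coincide.
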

\begin{proof}
We define a structure of a right vector $K$-space as follows: if  $\frac{a}{b}\in K$ and $m\in M$, then there exists a unique $x\in M$ such that $ma=xb$; set $m\circ \frac{a}{b} =x$.
Moreover,  if  $\frac{a}{b}=\frac{c}{d}$ and $0\neq m\in M$, then $m \circ \frac{a}{b}=m \circ \frac{c}{d}$. Indeed, if $m \circ \frac{a}{b}=x$ and $m \circ \frac{c}{d}=y$, then $mad=xbd$ and $mbc=ybd$ which means that $xbd=ybd$, hence $x=y$.

Let $\alpha\in {\rm End}\,(M_R)$, $\frac{a}{b}\in K$, $m\in M$. By definition, $am=b(\frac{a}{b}\circ m)$, hence, $a\alpha(m)= b\alpha(\frac{a}{b}\circ m)$, which means that $\alpha(\frac{a}{b}\circ m)=\frac{a}{b}\circ\alpha(m)$, so $\alpha$ is a $K$-linear transformation.
Note that,  if $a\in R$ and $m\in M$, then $m \circ \frac{a}{1}=ma$.

Conversely, if $\alpha$ is a $K$-linear transformation, $a\in R$, $m\in M$, then
$\alpha(\frac{a}{1}\circ m)=\frac{a}{1}\circ \alpha m$, i.e. $\alpha(am)=a\alpha(m)$. We have proved that every $K$-linear transformation is an right $R$-module homomorphism. \end{proof}

\begin{remark}\label{R:2}
The center $Z(R)$ of a weakly simple ring  $R$ is a domain.
\end{remark}
\begin{remark}\label{R:3}
For every right $R$-module $M$ the underlying group $M(+)$ is a discrete left topological   $({\rm End}\,(M_R),\mathcal{T}_{fin})$-module.
\end{remark}
Indeed, $T(m)(m)=0$ for every $m\in M$. Moreover,
\[
{\rm End}\,(M_R)\{0\}=\{0\},
\]
 so $M$ is a discrete left topological   $({\rm End}\,(M_R),\mathcal{T}_{fin})$-module.

\begin{theorem}\label{T:8}
Let $M_R$ be a module over a commutative ring $R$.

If the topological ring $({\rm End}\,(M_R),\mathcal{T}_{fin})$ is weakly simple, then:
\begin{itemize}
\item[{(i)}] $P=\{r\in R\mid Mr=0\}$ is a prime ideal of $R$.

\item[{(ii)}] $M$ is a vector space over the field $K$ of fractions of  $R/P$ and  the $R$-endomorphisms of $M$ are exactly the  $K$-linear transformations.

\end{itemize}

\noindent
Conversely,  if $M_R$ is an $R$-module and are satisfied (i) and (ii), then  $({\rm End}\,(M_R),\mathcal{T}_{fin})$ is a weakly simple topological ring.
\end{theorem}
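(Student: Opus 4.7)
The plan is to prove the forward direction by combining centrality of scalar multiplication with the weak simplicity hypothesis, and to obtain the converse by reducing to Leptin's theorem on vector-space endomorphism rings invoked in the proof of Theorem~\ref{T:6}. For (i), observe that for each $r \in R$ the map $\rho_r : m \mapsto mr$ is an $R$-endomorphism (because $R$ is commutative), and the resulting ring homomorphism $\phi: R \to {\rm End}\,(M_R)$, $r \mapsto \rho_r$, has kernel exactly $P$ and image contained in $Z({\rm End}\,(M_R))$. Since by Remark~\ref{R:2} the center of a weakly simple ring is a domain, $R/P$ embeds into a domain and is itself a domain, so $P$ is prime.

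For (ii), my strategy is to verify the hypotheses of Lemma~\ref{L:3}: the $R/P$-module $M$ must be divisible and torsion-free. Fix a nonzero $\bar r \in R/P$, so $\rho_{\bar r}$ is nonzero and central in ${\rm End}\,(M_R)$, and both $M\bar r = {\rm Im}(\rho_{\bar r})$ and $\ker \rho_{\bar r}$ are fully invariant submodules of $M$. For divisibility, let $J = \{\alpha \in {\rm End}\,(M_R) : \alpha(M) \subseteq M\bar r\}$; full invariance of $M\bar r$ makes $J$ a two-sided ideal, and, since each evaluation $\alpha \mapsto \alpha(m)$ is continuous into the discrete module $M$ (see Remark~\ref{R:3}), $J$ is closed in $\mathcal{T}_{fin}$. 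Since $\rho_{\bar r} \in J \setminus \{0\}$, weak simplicity forces $J = {\rm End}\,(M_R)$, so $1_M \in J$ and $M = M\bar r$. Analogously, $T(\ker \rho_{\bar r})$ is a closed two-sided ideal containing the nonzero $\rho_{\bar r}$ (which kills its own kernel), so weak simplicity gives $T(\ker \rho_{\bar r}) = {\rm End}\,(M_R)$, forcing $\ker \rho_{\bar r} = 0$. Lemma~\ref{L:3} then yields (ii).

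The converse is essentially a translation: once (i) and (ii) hold, ${\rm End}\,(M_R)$ coincides as a ring with the endomorphism ring ${\rm End}\,(M_K)$ of the $K$-vector space $M$, and $\mathcal{T}_{fin}$ depends only on the set $M$, so the two topological rings are identical. If $\dim_K M < \infty$, then ${\rm End}\,(M_K)$ is a full matrix ring over $K$, hence simple, hence weakly simple; if $\dim_K M = \infty$, the weak simplicity of $({\rm End}\,(M_K), \mathcal{T}_{fin})$ is the result of Leptin already invoked inside the proof of Theorem~\ref{T:6}. The main obstacle I anticipate is verifying that $J$ and $T(\ker \rho_{\bar r})$ are genuinely closed and two-sided: two-sidedness hinges on centrality of $\rho_{\bar r}$ (through full invariance of the relevant submodules), while closedness rests on $M$ being a discrete topological module over $({\rm End}\,(M_R), \mathcal{T}_{fin})$. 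Once these structural points are in place, the weak simplicity dichotomy is forced into the correct alternative by the nonzero element $\rho_{\bar r}$ sitting inside each ideal.
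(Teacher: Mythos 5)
Your proof is correct and follows essentially the same route as the paper: the same central elements $\rho_r$ ($=\alpha_r$ in the paper's notation), the same closed two-sided ideals forced to be $0$ or all of ${\rm End}\,(M_R)$ by weak simplicity, and the same reduction to Lemma~\ref{L:3} for the vector-space structure and to Theorem~\ref{T:6} (hence Leptin) for the converse. The only cosmetic differences are that you deduce primeness of $P$ from Remark~\ref{R:2} instead of directly from $\overline{{\rm End}\,(M_R)\alpha_a}\cdot\overline{{\rm End}\,(M_R)\alpha_b}=0$, and you get torsion-freeness from the single closed ideal $T(\ker\rho_{\bar r})$ rather than pointwise; both are equivalent repackagings of the paper's argument.
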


\begin{proof}
$\Rightarrow:$
If  $({\rm End}\,(M_R),\mathcal{T}_{fin})$ is weakly simple, then  the mapping:
\begin{equation}\label{E:2}
\alpha_r: M \to M, \qquad  m\mapsto mr\qquad\quad(r\in R)
\end{equation}
is an $R$-module homomorphism and $\alpha_r\in Z$(= the center of ${\rm End}\,(M_R)$).

First we show that the part (i) holds.  Indeed, if $a,b\in R$ and $ab=0$, then $\alpha_a\alpha_b=0$ (see (\ref{E:2})). Thus $({\rm End}\,(M_R)\alpha_a)\cdot ({\rm End}\,(M_R)\alpha_b)=0$, so
\[
(\overline{{\rm End}\,(M_R)\alpha_a})\cdot (\overline{{\rm End}\,(M_R)\alpha_b})=0.
\]
Since ${\rm End}\,(M_R)$ is weakly simple, one of them,  say  $\overline{{\rm End}\,(M_R)\alpha_a}$, is zero.  This implies that $\alpha_a=0$, hence $a\in P$.

(ii) The structure of $R/P$-module  on $M$ is defined as follows:  if $r\in R$ and $m\in M$, then put $M(r+P)=mr$.

Note  that $M$ is a torsion-free right $R/P$-module. Assume  that  $m(r+P)=0$, where   $0\neq r+P\in R/P$ and  $0\neq m\in M$. Then $mr=0=\alpha_r(m)$ (see (\ref{E:2})).  Thus ${\rm End}\,(M_R)\alpha_r(m)=0$. It follows that  $\big(\overline{{\rm End}\,(M_R)\alpha_r}\big)(m)=0$ by  Remark \ref{R:3}. Since ${\rm End}\,(M_R)$ is weakly simple
\[
\overline{{\rm End}\,(M_R)\alpha_r}={\rm End}\,(M_R).
\]
We obtained that ${\rm End}\,(M_R)(m)=0$, so $m=0$, a contradiction.

 Under this convention  $R$-submodules are exactly $R/P$-submodules and $R$-endomorphisms are exactly
$R/P$-endomorphisms.

The module  $M$ is a divisible $R/P$-module. Indeed, if  $0\neq r+P\in R/P$, then  $0\not=M(r+P)=Mr$. Suppose that $Mr\neq M$. Consider
\[
I=\{\alpha\in E\vert \alpha(M)\subseteq Mr\}.
\]
Since $Mr$ is a fully invariant submodule, $I$ is a two-sided ideal of $({\rm End}\,(M_R),\mathcal{T}_{fin})$.

The ideal $I$ is closed. Indeed, let $\alpha\in \overline{I}$. If $m\in M$, then there exists $\beta\in I$ such that $\alpha-\beta\in T(m)$. Clearly,   $\alpha(m)=\beta(m)\in Mr$ and so $\alpha\in I$. We have proved that $I$ is closed.

Since $1_M\notin I$, $I=0$. It follows that    $\alpha_r=0$ (see (\ref{E:2})),  a contradiction.

The module $M$ has a structure of a right $K$-vector space and ${\rm End}\,(M_R)$ is exactly the ring of endomorphisms of $M$ by Lemma \ref{L:3}.

The converse follows from  Theorem \ref{T:6}.
\end{proof}

A characterization of completely simple topological ring ${\rm End}\,(M_R)$ is given by the following.

\begin{theorem}\label{T:9} Let $M_R$ be a module over a commutative ring $R.$
The topological  ring $({\rm End}\,(M_R), \mathcal{T}_{fin})$ is completely simple if and only  are satisfied the conditions (i) and (ii) of Theorem 8 and either
\begin{itemize}
\item[\rm(i)] $M$ is finite or
\item[\rm(ii)] $M$ is infinite and the dimension of $M$ over the field $K$  is infinite.
\end{itemize}
\end{theorem}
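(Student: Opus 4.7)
The plan is to derive Theorem \ref{T:9} as a direct combination of Theorem \ref{T:6} and Theorem \ref{T:8}: the latter lets us replace the $R$-module structure on $M$ by a $K$-vector space structure with the same topological endomorphism ring, after which the former classifies exactly when such an endomorphism ring is completely simple.

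For the implication $(\Leftarrow)$, I assume conditions (i) and (ii) of Theorem \ref{T:8} together with the dichotomy on $M$ in the statement. Theorem \ref{T:8} yields a right $K$-vector space structure on $M$ such that ${\rm End}(M_R)={\rm End}(M_K)$; moreover $\mathcal{T}_{fin}$ depends only on the underlying abelian group of $M$ and its collection of finite subsets, so the two rings coincide even as topological rings. If $M$ is finite, then $K$ embeds into $M$ and is therefore finite, so $K$ carries no nondiscrete Hausdorff ring topology at all, and Theorem \ref{T:6} delivers complete simplicity. If $M$ is infinite with $\dim_K M=\infty$, Theorem \ref{T:6} applies directly.

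For the implication $(\Rightarrow)$, I assume $({\rm End}(M_R),\mathcal{T}_{fin})$ is completely simple. Complete simplicity implies weak simplicity, so Theorem \ref{T:8} supplies the prime ideal $P$, the field $K$ of fractions of $R/P$, and the identification ${\rm End}(M_R)={\rm End}(M_K)$. Theorem \ref{T:6} then leaves only two possibilities: either $\dim_K M=\infty$, which forces $M$ to be infinite and puts us in case (ii); or $\dim_K M<\infty$ and $K$ admits no nondiscrete Hausdorff ring topology. In the latter subcase I argue that $K$ must be finite, because every infinite field carries a nondiscrete (indeed metrizable) field topology by Kiltinen's theorem, and such a topology would transfer through the isomorphism ${\rm End}(M_K)\cong M(\dim_K M,K)$ to a strictly coarser ring topology on ${\rm End}(M_R)$, contradicting minimality. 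Hence $M$, being a finite-dimensional space over a finite field, is itself finite and we are in case (i).

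The only nontrivial external input, and hence the main obstacle, is the appeal in the forward direction to the fact that every infinite field admits a nondiscrete ring topology; the remainder is a formal pairing of Theorems \ref{T:6} and \ref{T:8}. One could avoid the black-box citation by exhibiting such a topology explicitly in each characteristic (a $\mathfrak{p}$-adic valuation topology on a suitable finitely generated subring, extended to $K$), at the cost of a small case analysis. Note that no analogous issue arises for noncommutative division rings here, since Theorem \ref{T:8} already constrains $K$ to be a commutative field, which keeps us squarely inside the territory where Kiltinen's result applies.
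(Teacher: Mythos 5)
Your proposal is correct and follows essentially the same route as the paper: both directions reduce to the combination of Theorem \ref{T:8} (replacing the $R$-module structure by a $K$-vector space structure with the same topological endomorphism ring) and Theorem \ref{T:6}, with the forward direction hinging on the Hochster--Kiltinen theorem that every infinite field admits a nondiscrete ring topology, which the paper also invokes at exactly this point. The only cosmetic difference is that the paper organizes the forward case split by whether $M$ and $R/P$ are finite, while you split according to the dichotomy of Theorem \ref{T:6}; the substance is identical.
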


\begin{proof}
$\Rightarrow:$ According to Theorem \ref{T:8}, the ideal $P$ is prime   and the topology of  ${\rm End}\,(M_R)$ coincide with the finite topology of  ${\rm End}\,(M_K)$, where $K$ is the field of fractions of $R/P$. If $M$ is finite, we have the part (i).
Assume that $M$ is infinite. If $R/P$ is finite, then the dimension of $M$ over $K$ is infinite. Suppose that $R/P$ is infinite and $\dim _K(M)=n<\aleph_0$. Then $M$ is isomorphic to $M(n,K)$. Since $K$ is an infinite field, it admits a nondiscrete ring topology (see \cite{Hochster_Kiltinen}) and we obtain a contradiction because ${\rm End}\,(M_R)$ is a discrete ring. Consequently  $\dim _K(M)$ is infinite.

\noindent
$\Leftarrow$ This follows from Theorems \ref{T:6} and \ref{T:8}.\end{proof}

\begin{corollary}\label{C:4}
The topological ring $({\rm End}\,(A), \mathcal{T}_{fin})$ of an abelian group $A$ is completely  simple if and only  one of the following conditions holds:
\begin{itemize}
\item[\rm(i)] $A$ is a elementary abelian  $p$-group.
\item[\rm(ii)] $A$ is a divisible torsion-free group of infinite rank.
\end{itemize}
\end{corollary}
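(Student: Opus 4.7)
The plan is to derive Corollary 4 as a specialization of Theorem 9 to the case $R=\mathbb{Z}$, where an abelian group is just a $\mathbb{Z}$-module. The prime ideals of $\mathbb{Z}$ are $(0)$ and $(p)$ for primes $p$, so there are exactly two cases to handle for the ideal $P=\{r\in\mathbb{Z}\mid Ar=0\}$ appearing in Theorem 8(i). The task reduces to translating the module-theoretic conclusions of Theorems 8 and 9 into the language of abelian groups.

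First I would dispose of the $(\Leftarrow)$ direction. If $A$ is an elementary abelian $p$-group, take $P=(p)$, so that $\mathbb{Z}/P=\mathbb{F}_p=K$; then $A$ is automatically a $K$-vector space, and any $\mathbb{Z}$-endomorphism of $A$ is automatically $\mathbb{F}_p$-linear because $pA=0$. Since $\mathbb{F}_p$ is finite, it admits no nondiscrete Hausdorff ring topology, so Theorem 9 applies regardless of whether $\dim_{\mathbb{F}_p}A$ is finite or infinite, yielding that $({\rm End}(A),\mathcal T_{fin})$ is completely simple. If instead $A$ is divisible torsion-free of infinite rank, take $P=(0)$; then $K=\mathbb{Q}$, and $A$ is a $\mathbb{Q}$-vector space by the standard identification of divisible torsion-free abelian groups with rational vector spaces. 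Any $\mathbb{Z}$-endomorphism of such $A$ is automatically $\mathbb{Q}$-linear (division by nonzero integers is unique). Since $\dim_\mathbb{Q} A=\infty$, Theorem 9(ii) gives complete simplicity.

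For the $(\Rightarrow)$ direction I would apply Theorem 9 directly. Complete simplicity implies weak simplicity, so Theorem 8 forces $P$ to be a prime ideal of $\mathbb{Z}$, and $A$ is a vector space over $K=\operatorname{Frac}(\mathbb{Z}/P)$ whose $R$-endomorphisms coincide with $K$-linear maps. If $P=(p)$, then $K=\mathbb{F}_p$ and $A$ is an $\mathbb{F}_p$-vector space, i.e., an elementary abelian $p$-group, giving case (i). If $P=(0)$, then $K=\mathbb{Q}$ and $A$ is a $\mathbb{Q}$-vector space, i.e., divisible and torsion-free. The finite case of Theorem 9(i) forces $A=0$, which is incompatible with $({\rm End}(A))^2\neq 0$; hence we fall into Theorem 9(ii), so $\dim_\mathbb{Q} A=\infty$, which is exactly infinite torsion-free rank, giving case (ii).

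There is no substantive obstacle — the work is bookkeeping. The one point needing minor care is to note that when $P=(p)$ and $A$ is \emph{finite}, Theorem 9 still delivers complete simplicity because $\mathbb{F}_p$ is finite and hence does not admit a nondiscrete ring topology (via Theorem 6), so small elementary abelian $p$-groups are not excluded from case (i); this asymmetry with the $P=(0)$ case is the reason why (ii) carries the extra hypothesis of infinite rank while (i) does not.
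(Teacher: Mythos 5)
Your proposal is correct and is exactly the intended argument: the paper states Corollary~\ref{C:4} without proof as the specialization of Theorem~\ref{T:9} (via Theorem~\ref{T:8}) to $R=\mathbb{Z}$, where $P$ is either $(p)$, giving elementary abelian $p$-groups, or $(0)$, giving divisible torsion-free groups of infinite rank. Your bookkeeping, including the observation that infinite elementary abelian $p$-groups automatically have infinite $\mathbb{F}_p$-dimension while the $\mathbb{Q}$ case genuinely needs the infinite-rank hypothesis, is accurate.
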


\newpage

\bibliographystyle{abbrv}
\bibliography{Bovdi_Ursul}
\end{document}